\DeclareMathAlphabet{\pazocal}{OMS}{zplm}{m}{n} %allows pazocal lettering
\definecolor{blue}{rgb}{0,0,1}
\definecolor{red}{rgb}{1,0,.2}
\theoremstyle{plain}
\newtheorem{thm}{Theorem}[section]
\newtheorem{lem}[thm]{Lemma}
\newtheorem{prop}[thm]{Proposition}
\newtheorem{defn}[thm]{Definition}
\newtheorem{cor}[thm]{Corollary}
\newtheorem{example}[thm]{Example}
\numberwithin{equation}{section}
\newcommand{\R}{\ensuremath{\mathbb{R}}}
\DeclareMathOperator{\Fav}{Fav}
\DeclareMathOperator{\FavG}{\Fav_{\Gamma}}
\DeclareMathOperator{\K}{\mathcal{K}}
\newcommand{\dimh}{\operatorname{dim}_{\mathcal{H}}}
\newcommand{\proj}{\widetilde{\pi_{\alpha}}} 
\newcommand{\vis}{\operatorname{vis}}
\newcommand{\al}{\alpha}
\newcommand{\be}{\beta}
\newcommand{\ga}{\gamma}
\newcommand{\de}{\delta}
\newcommand{\Ga}{\Gamma}
\newcommand{\la}{\lambda}
\newcommand{\La}{\Lambda}
\newcommand{\Om}{\Omega}
\newcommand{\set}[1]{\left\{#1\right\}}
\newcommand{\pr}[1]{\left( #1 \right) }
\author[Tyler Bongers]{Tyler Bongers}
\address{Tyler Bongers, Department of Mathematics, Harvard University}
\email{bongers@math.harvard.edu}
\author[Krystal Taylor]{Krystal Taylor}
\address{Krystal Taylor, Department of Mathematics, The Ohio State University}
\email{taylor.2952@osu.edu}
\thanks{Taylor is supported in part by the Simons Foundation Grant 523555.}
\title{Transversal families of nonlinear projections and generalizations of Favard length}
\begin{document}
\subjclass[2020]{28A75, 28A80, 57N75}
\keywords{Nonlinear projections, Transversality, Favard length, Fractals}
\date{\today}

\begin{abstract}
Projections detect information about the size, geometric arrangement, and dimension of sets. To approach this, one can study the energies of measures supported on a set and the energies for the corresponding pushforward measures on the projection side. For orthogonal projections, quantitative estimates rely on a separation condition: most points are well-differentiated by most projections. It turns out that this idea also applies to a broad class of nonlinear projection-type operators satisfying a \textit{transversality condition}. In this work, we establish that several important classes of nonlinear projections are transversal. This leads to quantitative lower bounds for decay rates for nonlinear variants of Favard length, including Favard curve length (as well as a new generalization to higher dimensions, called Favard surface length) and visibility measurements associated to radial projections. As one application, we provide a simplified proof for the decay rate of the Favard curve length of generations of the four corner Cantor set, first established by Cladek, Davey, and Taylor.
\end{abstract}

\maketitle

%\date{\today}
\hypersetup{linktocpage}
  \setcounter{tocdepth}{1}
\tableofcontents
\section{Introduction and Main Results} 
The Favard length of a planar set $E$ is the average length of its orthogonal projections.
It is defined by 
$$\Fav(E) = \frac{1}{\pi}\int_0^\pi |P_\theta(E)| d\theta,$$
where $P_{\theta}$ is orthogonal projection into a line $L_{\theta}$ through the origin at angle $\theta$ from the positive $x$-axis and $|\cdot|$ denotes the $1$-dimensional Hausdorff measure.
 Favard length gives a $1$-dimensional notion of the \textit{size} of a set which takes into account the geometry, arrangement, and rectifiability of the underlying set. As a consequence, there are deep relationships between Favard length and analytic capacity, the understanding of which is related to important open problems in geometric measure theory. As we will see, variants of the Favard length can also be formulated for more general families of mappings, beyond the orthogonal projections, and in higher dimensions.
 
As the Hausdorff dimension of a set cannot increase under a projection, sets of dimension $s < 1$ have Favard length equal to zero. A refinement due to Marstrand \cite{Mar54} actually shows that the dimension of such a set will be preserved in almost every direction. On the other hand, sets with dimension $s > 1$ will have positive-length projections in almost every direction, and therefore have positive Favard length. 
Therefore, the critical dimension is $s = 1$.  

In dimension $1$, the key geometric property that Favard length can detect is
\textit{rectifiability}: it is a consequence of the Besicovitch projection theorem  \cite{Bes39} that 
purely unrectifiable sets in the plane with finite $1$-dimensional Hausdorff measure have Favard length equal to zero.  
(For an exposition of the full Besicovitch-Federer projection theorem in all dimensions, see \cite[Chapter 18]{Mat95}.) While Besicovitch's theorem gives a qualitative result, we can find related quantitative theorems. If $E(r)$ is the $r$-neighborhood of a set $E$ with Favard length zero, the dominated convergence theorem shows that
$$\lim_{r \to 0^+} \Fav(E(r)) = 0.$$

 More precise asymptotic information for $\Fav(E(r))$ as $r$ decreases to zero can give quantitative measurements of the dimension, size, and geometric arrangement of $E$. A number of authors have investigated quantitative versions of the Besicovitch projection theorem for general sets. The best known results in terms of upper and lower bounds are due to Tao \cite{Tao09} and Mattila \cite{Mat90} respectively.

Tao introduced a quantitative version of rectifiability for sets in the plane of finite $\mathcal{H}^1$ measure and used multiscale analysis to show that an upper bound on the so-called rectifiability constant yields an upper bound on the Favard length. A nonlinear version of Tao's theorem is studied in a work of Davey and the second listed author \cite{DT21}.

Mattila \cite{Mat90} established a fundamental relationship between the Favard length of a set and its Hausdorff dimension.  In two dimensions, it states:
\smallskip

\begin{thm}[Favard lengths for neighborhoods; Mattila \cite{Mat90}]\label{theorem:mattila90_main}
Fix $s \in (0, 1]$. If $F \subseteq \mathbb{R}^2$ is the support of a Borel probability measure with $\mu(B(x, r)) \le b r^s$ for all $x \in \mathbb{R}^2$ and $0 < r < \infty$, then 
$$\Fav(F(r)) \gtrsim r^{1 - s}$$
if $s < 1$ and
$$\Fav(F(r)) \gtrsim \left(\log r^{-1}\right)^{-1}$$
if $s = 1$.
\end{thm}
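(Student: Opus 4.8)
The plan is to prove the lower bound by the classical duality between the length of a projection and the $L^2$-norm of the projected measure, combined with a Frostman-type bound on a frequency-truncated energy. Throughout I may assume $F$ is bounded (if not, typical projections have infinite length and the estimate is trivial) and $0 < r < 1$. First I would mollify $\mu$ at scale $r$: set $\psi_r = (\pi r^2)^{-1}\mathbf{1}_{B(0,r)}$ and $\nu = \mu * \psi_r$, a probability measure supported in a fixed dilate of $F(r)$ and absolutely continuous with bounded density. Its projection $P_\theta\nu$ then has a density $g_\theta \in L^2(\R)$ supported in $P_\theta(F(Cr))$, and Cauchy--Schwarz gives $1 = \int g_\theta \le |P_\theta(F(Cr))|^{1/2}\,\|g_\theta\|_{L^2}$, that is, $|P_\theta(F(Cr))| \ge \|g_\theta\|_{L^2}^{-2}$. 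Averaging over $\theta$ and applying Jensen's inequality to the convex function $x \mapsto 1/x$ yields $\Fav(F(Cr)) \gtrsim \big(\int_0^\pi \|g_\theta\|_{L^2}^2\,d\theta\big)^{-1}$, so the whole problem reduces to an \emph{upper} bound on the integrated energy $\int_0^\pi \|g_\theta\|_{L^2}^2\,d\theta$; rescaling $r$ by the harmless constant $C$ at the end recovers the stated neighborhood.

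Next I would pass to the Fourier side. Since the one–dimensional Fourier transform of $P_\theta\nu$ is the slice $\widehat\nu(t e_\theta)$ with $e_\theta = (\cos\theta,\sin\theta)$, Plancherel and the change to polar coordinates $\xi = t e_\theta$ (so $d\xi = |t|\,dt\,d\theta$ as $\theta$ ranges over $[0,\pi)$ and $t$ over $\R$) give
$$\int_0^\pi \|g_\theta\|_{L^2(\R)}^2\,d\theta = \int_{\R^2}|\widehat\nu(\xi)|^2\,\frac{d\xi}{|\xi|} = \int_{\R^2}|\widehat\mu(\xi)|^2\,|\widehat{\psi_r}(\xi)|^2\,\frac{d\xi}{|\xi|}.$$
Here $|\widehat{\psi_r}(\xi)| \le 1$ everywhere and $|\widehat{\psi_r}(\xi)| \lesssim (r|\xi|)^{-3/2}$ for $r|\xi| \gtrsim 1$, so the mollifier acts as a smooth cutoff to $|\xi| \lesssim 1/r$. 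I would then decompose into dyadic annuli $A_j = \{2^j \le |\xi| < 2^{j+1}\}$, where $|\xi|^{-1}\sim 2^{-j}$ and $|\widehat{\psi_r}(\xi)|^2 \lesssim \min\big(1,(r2^{j})^{-3}\big)$, reducing the estimate to the annular energies $\int_{A_j}|\widehat\mu|^2\,d\xi$.

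The heart of the argument is the dyadic Frostman estimate
$$\int_{A_j}|\widehat\mu(\xi)|^2\,d\xi \lesssim_b 2^{j(2-s)}.$$
I would prove this by choosing a Schwartz $\phi \ge \mathbf{1}_{\{1 \le |\xi| < 2\}}$, writing $\int |\widehat\mu|^2\phi(2^{-j}\xi)\,d\xi = 2^{2j}\iint \widehat\phi\big(2^j(x-y)\big)\,d\mu(x)\,d\mu(y)$ via Parseval, and bounding the double integral by $\iint (1 + 2^j|x-y|)^{-N}\,d\mu\,d\mu \lesssim_b 2^{-js}$; this last bound is precisely where the growth hypothesis $\mu(B(x,\rho)) \le b\rho^s$ enters, through a routine dyadic decomposition of the inner $y$-integral into annuli centered at $x$ (taking $N > s$ for summability). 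Summing the weighted annular contributions gives $\sum_j 2^{j(1-s)}\min\big(1,(r2^j)^{-3}\big)$: the low frequencies $2^j \le 1/r$ produce $\sum_{2^j \le 1/r} 2^{j(1-s)}$, a geometric series comparable to $r^{-(1-s)}$ when $s < 1$ and a sum of $\sim \log(1/r)$ unit terms when $s = 1$, while the mollifier's decay makes the high-frequency part summable and of the same or lower order. Feeding $\int_0^\pi\|g_\theta\|_{L^2}^2\,d\theta \lesssim r^{-(1-s)}$ (resp. $\lesssim \log(1/r)$) into the reduction of the first paragraph yields $\Fav(F(r)) \gtrsim r^{1-s}$ for $s < 1$ and $\Fav(F(r)) \gtrsim (\log r^{-1})^{-1}$ for $s = 1$.

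I expect the main obstacle to be bookkeeping rather than conceptual: arranging the mollification so that $\supp\nu$ lies in a controlled neighborhood of $F$ while the decay of $\widehat{\psi_r}$ keeps the high-frequency tail summable at the correct order (a crude $|\widehat\mu|\le 1$ bound on the tail is \emph{not} enough and must be replaced by the annular Frostman estimate itself), together with proving that estimate with the right dependence on $b$ and $N$. The clean dichotomy between $s < 1$ and $s = 1$ falls out automatically from whether the geometric series $\sum 2^{j(1-s)}$ is dominated by its top term or instead grows logarithmically in the number of scales.
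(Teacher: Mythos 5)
Your argument is correct in outline and reaches the right bounds, but it is a genuinely different realization of the energy method than the one this paper (following Mattila \cite{Mat90}) uses. Both proofs share the same skeleton: replace $\mu$ by an auxiliary measure living on a neighborhood of $F$ whose $1$-energy is $\lesssim r^{s-1}$ (resp.\ $\lesssim \log(1/r)$), use Cauchy--Schwarz to convert $\int g_\theta = 1$ into $|P_\theta(F(r))| \ge \|g_\theta\|_{L^2}^{-2}$, and then a second Cauchy--Schwarz/Jensen step to bound $\Fav$ below by the reciprocal of the averaged $L^2$ energy. Where you diverge is in how that averaged energy is controlled. You pass to the Fourier side: Plancherel plus polar coordinates gives the exact identity $\int_0^\pi \|g_\theta\|_{L^2}^2\, d\theta \sim \int |\widehat{\nu}(\xi)|^2 |\xi|^{-1}\, d\xi$, and the growth hypothesis enters through the dyadic annular estimate $\int_{2^j \le |\xi| < 2^{j+1}} |\widehat{\mu}|^2 \lesssim_b 2^{j(2-s)}$. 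The paper instead stays in physical space: the auxiliary measure is built by a covering argument with $r$-balls (Lemma \ref{lem_energy_bound}), and the averaged $L^2$ norm of the projected densities is bounded by $I_1(\nu)$ via the \emph{transversality} estimate $\gamma_{n,m}\{V : |P_V(x-y)| \le \delta\} \lesssim \delta^m/|x-y|^m$ together with the distribution-function computation in Lemmas \ref{theorem:l2_integrability} and \ref{theorem:general_favard}. For orthogonal projections the two routes are equivalent and yours is arguably cleaner; what the spatial route buys, and the reason the paper adopts it, is that it survives when Plancherel and the polar-coordinate identity are unavailable --- it is exactly the step that generalizes to radial, curve, and surface projections, where only the measure-of-exceptional-parameters estimate \eqref{H2alt} persists.

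One small repair is needed in your summation for $s = 1$: the sum $\sum_{2^j \le 1/r} 2^{j(1-s)}$ has infinitely many unit terms if taken over all $j \le \log_2(1/r)$, not $\sim \log(1/r)$ of them. You must treat the low frequencies $|\xi| \lesssim 1$ separately, using the trivial bound $|\widehat{\mu}| \le 1$ so that $\int_{A_j} |\widehat{\mu}|^2 |\xi|^{-1}\, d\xi \lesssim 2^{2j} \cdot 2^{-j} = 2^j$, which is summable over $j \le 0$ (this is where the assumed boundedness of $F$ is used). With that cutoff the count of unit terms is indeed $\sim \log(1/r)$ and the rest of your bookkeeping, including the high-frequency tail controlled by the $|\widehat{\psi_r}(\xi)| \lesssim (r|\xi|)^{-3/2}$ decay, goes through as you describe.
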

Throughout the paper, we will use the notation $A \lesssim B$ to mean that there is a constant $C$ so that $A \le C B$, and will write $A \sim B$ if $A \lesssim B$ and $B \lesssim A$. 

The proof of Mattila's result follows from studying \textit{energies}: if $\mu$ is a measure, its \emph{$s$-energy} is
\begin{equation}\label{energy}
I_s(\mu) = \iint \frac{d\mu(x) \, d\mu(y)}{|x - y|^s}.
\end{equation}
This quantity is closely tied to Hausdorff dimension; see, e.g. \cite[Chapter 8]{Mat95} for a formulation of the definition of Hausdorff dimension in terms of $s$-energies. In order to relate a measure to the projections, we need the notion of a pushforward: if $f : X \to Y$ is a function and $\mu$ is a measure supported on $X$ we will define the \emph{pushforward measure} $f_{\sharp} \mu$ by
\begin{equation}\label{pushforward} 
(f_{\sharp} \mu)(A) = \mu(f^{-1}(A)), \quad\quad A \subseteq Y. 
\end{equation}
In general it can be difficult to study the pushforward of under a particular mapping, yet it turns out that the \textit{average} energy of a projection can be well controlled.  
That is, if $\{\pi_{\alpha} : \alpha \in A\}$ is an indexed family of orthogonal projections, it frequently is possible to precisely estimate
$$\int I_t(\pi_{\alpha\sharp} \mu) \, d\psi(\alpha),$$
where $\psi$ is a measure on the index set $A$.  By studying the average energy of the pushforwards of specialized measures supported on $F(r)$ with particular density properties, Mattila was able to establish the stated lower bounds. Further details are given in Section \ref{energy_section}.

In the special setting that the underlying set is a fractal generated by an iterated function system, Mattila's techniques with energies are also applicable. A standard example of this is to consider the generations $K_n$ of the \emph{four corner Cantor set}; it is defined by dividing the unit square into $16$ axis parallel squares of side length $\frac{1}{4}$, keeping the four corner squares, and iterating the process within each corner. The limit of this process gives a prototypical example of a purely unrectifiable set with positive and finite length. As such, an important open problem is to estimate upper and lower bounds on the rate of decay in $n$ of $\Fav(\K_n)$ (see \cite{Laba12} for a survey of results and techniques related to this problem). 
Mattila's techniques can be used to show that $\Fav(\K_n) \gtrsim n^{-1}$; subsequent work has achieved the tighter bounds
\begin{equation}\label{K_n}
\frac{\log n}{n} \lesssim \Fav(\K_n) \lesssim \frac{1}{n^{1/6 - \delta}}
\end{equation}
for any $\delta > 0$, with the bounds due to Bateman and Volberg \cite{BaV10} and Nazarov, Peres, and Volberg \cite{NPV10} respectively. Further, it is still a deep open question whether the Favard length $\Fav(\K_n)$ is larger or smaller than the analytic capacity $\gamma(\K_n)$, which is known to be of order $n^{-1/2}$ \cite{Tolsa}.

The primary aim of this paper is to formulate Theorem \ref{theorem:mattila90_main} in a nonlinear setting for families of projections which are not orthogonal projections. In particular, we will consider families of maps satisfying the so-called transversality condition. After we establish a correspondence between the energy of a measure and its pushforwards under transversal families, we will apply these relationships to study the asymptotic decay rates of nonlinear variants of Favard length. In the process, we generalize the lower bounds on visibility established by Bond, {\L}aba, and Zahl \cite{BLZ} as well as provide a simplified proof of the lower bound for the Favard curve length of $\K_n$ derived by Cladek, Davey, and Taylor \cite{CDT21}; both of these results are explored in Section \ref{section_nonlinear_families}. Before stating our main results in Section \ref{section_main}, we give several examples of families of nonlinear projection operators in Section \ref{section_nonlinear_families} and we formalize the definition of transversality in Section \ref{overview}. 

%TRANSITION TO TALKING ABOUT TRANSVERSALITY
 \subsection{Nonlinear projections}\label{section_nonlinear_families}
When orthogonal projections are replaced by more general families of nonlinear projection-type maps, one may ask if 
Besicovitch's theorem and its quantitative counterparts still hold. In many settings, these theorems still apply. Examples of such families include radial projections associated with visibility, curve-based projections associated with the Favard curve length and the surface projections we will introduce in this paper. Due to the special geometry exhibited by these projection families, the energy techniques of Mattila can be applied with appropriate modifications, leading to analogues lower bound on nonlinear Favard lengths.

\subsubsection{Visibility}\label{visibility_summary}
Given a point $a \in \mathbb{R}^n$, the \emph{radial projection} 
 based at $a$ maps $\mathbb{R}^n \setminus \{a\}$ to the ($n-1$)-dimensional unit sphere 
 via
\begin{equation}\label{radial} P_a(x) : = \frac{x-a}{|x-a|}.\end{equation} 
The \emph{visibility} of a measurable set $E\subset \R^n$ from a \textit{vantage point} $a$ is
\begin{equation}\label{vis} \vis(a,E) = |P_a(E)|, \end{equation}
where $|\cdot|$ denotes the $(n-1)$-dimensional Hausdorff measure on the unit sphere. In applications, we will restrict the vantage points $a$ to a \textit{vantage set}, $A$. Informally, the visibility of a set $E$ measures how much of the sky is filled up by the constellation $E$ from an observer at vantage point $a$. As such, the set $E$ is referred to as the \textit{visible set}.

  Bond, {\L}aba, and Zahl obtained upper and lower bounds on the visibility of $\delta$-neighborhoods of unrectifiable self-similar $1$-sets in the plane. In particular, their lower bound \cite[Theorem 2.4]{BLZ} for visibility states that if $\mu$ is a positive, Borel, probability measure supported on a visible set $E\subset \R^2$ paired with an $L$-shaped vantage set $A\subset \R^2$ (with an extra separation condition), then
 $$I_1(\mu)^{-1} \lesssim \int_A \operatorname{vis}(a, E) \, da.$$
Their work provides quantitative versions of the results in \cite{Mar54, SimSol}.

In this paper we will generalize this result by proving it for a wider range of vantage sets and extending it to higher dimensions. In particular, we provide a much weaker constraint on the geometric relationship between the vantage set and the visible set. As a particular application, we will demonstrate how such results can be used to obtain a lower bound on the rate of decay of the visibility of generations of the four-corner Cantor set from a wide variety of curves. 

%
%FAVARD CURVE LENGTH
\subsubsection{Favard curve length}\label{Favard_curve_warmup_section}  
As a second example of a context in which energy techniques can be applied, we define the family of maps which induce the \textit{Favard curve length}. Let $\Ga$ denote a curve in $\R^2$. Given $\alpha \in \R$ and $(x,y)\in \R^2$, let $\Phi_\al(x,y) $ denote the set of $y$-coordinates of the intersection of $(x,y)+\Ga$ with the line $\{x=\al\}$. That is,
\begin{equation}\label{phi_al}
\Phi_\al(x,y) = \{ \beta\in \R: (\al, \beta) \in \left( (x,y)+\Ga\right) \cap \{x=\al\}\}.
\end{equation}
Given $\be \in \R$, the inverse set $\Phi_\al^{-1}\pr{\be} = \set{p\in \R^2 : \be \in \Phi_\al\pr{p}}$ is given by $\pr{\al, \be} - \Ga$. In the case that $\Ga$ can be expressed as the graph of a function and $\Phi_\al(x,y) \neq \emptyset,$ then $\Phi_\al(x,y) $ is a singleton and we identify $\Phi_\al(x,y) $ with that point.
 
If $E \subset \R^2$, then the \emph{Favard curve length} of $E$ is defined by 
\begin{equation}\label{FavG}
\FavG(E)
:= |\{(\al, \be) \in \R^2 : \Phi_\al^{-1}(\be) \cap E  \ne \emptyset \}|
= \int_{\R} |\Phi_\al(E)| d\al.
\end{equation}
Our basic assumption on $\Ga$ is that it is a piecewise $\mathcal{C}^1$ curve with piecewise bi-Lipschitz continuous unit tangent vectors; these conditions will be discussed in the transversality analysis that appears in Section \ref{Favard curve section}, as well as in  Section \ref{section:nonexamples} where we consider what goes wrong for non-transversal families.

The maps under consideration were originally introduced by Simon and the second listed author of this paper to study sum sets of the form $E+\Gamma$, where $\Gamma$ denotes a sufficiently smooth curve and $E$ denotes a compact set in $ \R^2$. To see the connection, we write
\begin{align*}
\FavG(E)
&= |\{(\al, \be) \in \R^2 : \Phi_\al^{-1}(\be) \cap E  \ne \emptyset \}|\\
&= |\{(\al, \be) \in \R^2 :     \{  \pr{\al, \be} - \Ga \}  \cap E  \ne \emptyset \}|\\
&= |\{(\al, \be) \in \R^2 :      \pr{\al, \be}  \cap \left( E +  \Ga \right)  \ne \emptyset \}|\\
&= |E + \Ga|.
\end{align*}
The measure and dimension of sets of the form $E+\Ga$ was established in \cite{ST17} and the interior of such sum sets was subsequently studied in \cite{STint}.  Connections to the study of pinned distance sets and the Falconer distance conjecture are also explored there.  In both \cite{ST17} and \cite{STint}, the results rely on relating the set $E$ to the dimension, measure, and interior of the images of $E$ under the maps $\{\Phi_\al\}$. A unifying ingredient in each of these works was the observation that the maps introduced in \eqref{phi_al} are similar to  orthogonal projection maps from the prespectives of measure, dimension, and interior.

%INTERPRETATION PROBABILISTIC/ GEOMETRIC
As a further interpretation of the Favard curve length, there is a probabilistic interpretation. The Favard length of a set is comparable to its Buffon needle probability (that is, the probability that a long, thin needle dropped near the set intersects the set). In the nonlinear setting, the Favard curve length is comparable to the probability that a dropped curve meets the set -- that is, the probability that $\Ga \cap E \neq \emptyset$ after conditioning to the event that $\Ga$ lies near $E$. We denote this probability by $\mathcal{P}_\Ga(E)$. In summary, 
\begin{equation}\label{eq_formulations}
\FavG(E) \sim |E + \Ga| \sim \mathcal{P}_\Ga(E).
\end{equation}
and our Theorem \ref{main} gives a lower bound on these equivalent quantities.

Cladek, Davey, and Taylor \cite{CDT21} obtained upper and lower bounds on the Favard curve length of $\K_n$, the $n$-th generation in the construction of the four corner Cantor set:
\begin{equation}\label{CDTbounds}
\frac{1}{n} \lesssim \Fav_\Ga(\K_n) \lesssim n^{-1/6 + \delta},
\end{equation}
which by \eqref{eq_formulations} implies upper and lower bounds on $|\K_n + \Ga| \sim \mathcal{P}_\Ga(\K_n)$.
The lower bound relied on self-similarity and a square-counting argument adapted to the nonlinear setting. In this paper, we will use energy methods to provide a simple alternative proof of the lower bound in \eqref{CDTbounds} which holds in a more general setting and does not require self-similarity. See Corollary \ref{corollary:four_corner_favard} for the details.
Further, we obtain a higher dimensional analogue of the lower bound in \eqref{CDTbounds}; this is the topic of the next section.  
We return to our discussion of Favard curve length in Section \ref{Favard curve section} after stating our main results.

It is worth remarking that other authors have studied related Buffon-type probability problems. In particular, Bond and Volberg \cite{BoV11} considered lower bounds in the context of the intersection of $\K_n$ with large circles of radius $n$. In that context, the curves were adapted to the generation $n$, instead of having a fixed underlying curve.

\subsubsection{Favard surface length in $\R^d$}\label{subsection_Fav_higher}
The Favard curve length can also be formulated in a higher dimensional setting, and we refer to the resulting quantity as the \textit{Favard surface length}. 
Note that we still use the term ``length'' as we will consider a family of maps $\Phi_\al : \R^d \rightarrow \R$ and take the  average of the $1$-dimensional measures of the images of $E$ under such maps.   
To the best of the authors' knowledge, this is the first article to define such a general notion of Favard length in higher dimensions. 
\smallskip

Let $\Ga = \Ga_d$ denote a surface in $\R^d$.  
Given $\alpha \in \R^{d-1}$ and $\vec{x} =:(x_1, \cdots, x_d)\in \R^d$, let $\Phi_\al(\vec{x}) $ denote the set of $x_d$-coordinates of the intersection of $\vec{x}+\Ga$ with the line $\widetilde{x} =: (x_1, \cdots, x_{d-1}) =\al$.  
That is 
\begin{equation}\label{phi_al_d}
\Phi_\al(\vec{x}) = \{ \beta\in \R: (\al, \beta) \in (\vec{x}+\Ga) \cap \{\widetilde{x}=\al\}\}.
\end{equation}
Given $\be \in \R$, the inverse set $\Phi_\al^{-1}\pr{\be} = \set{p \in \R^{d} : \be \in \Phi_\al\pr{p}}$ is given by $\pr{\al, \be} - \mathcal{C}$. When $\Ga$ can be expressed as the graph of a function and $\Phi_\al(\vec{x}) \neq \emptyset,$ then $\Phi_\al(\vec{x}) $ is a singleton and we identify $\Phi_\al(\vec{x}) $ with that point.

If $E \subset \R^d$, then the \emph{Favard surface length} of $E$ is defined by 
\begin{equation}\label{FavG_d}
\Fav_{\Ga,d}(E)
:= |\{(\al, \be) \in \R^{d} : \Phi_\al^{-1}(\be) \cap E  \ne \emptyset \}|
= \int_{\R^{d-1}} |\Phi_\al(E)| d\al.
\end{equation}
As was the case for the Favard curve length defined in the previous section, the Favard surface length of a set $E$ is 
equivalent to the $d-$dimensional Lebesuge measure of the Minkowski sum:
$$ \Fav_{\Ga,d}(e) \sim |E+\Ga|_d.$$ 
The quantity $\Fav_{\Ga,d}(E)$ has a probabilistic interpretation in terms of a Buffon \textit{surface} problem. 

\subsection{Overview of transversality}\label{overview} 
It is known that nonlinear analogues of Besicovitch's and Marstrand's projection theorems hold for families of maps satisfying a \textit{transversality condition}. A version of the Besicovitch projection theorem for transversal families can be found in \cite{HovJ2Led}, and a quantitative version is developed in \cite{DT21}. Marstrand's theorem is developed in the transversal setting in \cite[Theorem 5.1]{Sol98} and \cite[Chapter 18]{Mat15}; see also Proposition \ref{thm:basic_marstrand}.

The concept of transversality originated from the work of Simon and Pollicott \cite{PoSi}, where it was used to study the Hausdorff dimension of the attractors of a one-parameter family of IFS (iterated function systems). Solomyak then developed the transversality condition for the absolute continuity of invariant measures for a one parameter family of IFS in \cite{Sol95}.
Moreover, Solomyak combined the methods from \cite{PoSi} and \cite{Sol95} in \cite{Sol98}
to establish a much more general transversality method for generalized projections. 
The next step was made by Peres and Schlag \cite{PeSc00}, who further developed the method of transversality and gave a number of far reaching applications. Such results have been utilized and further developed by a  number of authors with far reaching geometric applications.  See, for instance, \cite{Bourgain10}, \cite{CDT21}, \cite{PeSc00}, \cite{Shmerkin20}, \cite{ST17}, \cite{STint}. 

 The transversality condition naturally arises when studying projection-type operators that do not overlap too much with each other, and this paper will explore the role transversality plays in developing energy estimates. The transversality condition addresses how, for distinct points $x$ and $y$ in the plane, the graphs $\left\{ (\theta, \pi_\theta(x)) \right\}$and $\left\{ (\theta, \pi_\theta(y)) \right\}$ should behave at points of intersection. 
Roughly speaking, it says that if $\pi_\theta(x)$ and $\pi_\theta(y)$ are close for some value of $\theta$, then they cannot remain close as $\theta$ changes. That is, the graphs cannot intersect tangentially, but must do so at a positive angle.

An alternative perspective on transversality will frequently come up in our techniques. If $x$ and $y$ are two fixed points, then the set of projections which cannot distinguish $x$ and $y$ must be rather small; placing this on the appropriate scale, this means that for each $\delta > 0$ there is an upper bound on the size of the set
$$\left\{\theta : \frac{|\pi_{\theta}(x) - \pi_{\theta}(y)|}{|x - y|} \le \delta\right\}.$$
Informally, this means that if $\pi_{\theta}$ is a randomly chosen projection then it will, with high probability, separate $x$ and $y$ on the projection side.

We now make precise our notion of transversality. The main objects are an indexed family of maps, a common domain and codomain equipped with measures, and a probability measure on the index set. In Section 2, we will place each of the families mentioned previously in the context of this definition and establish transversality with the appropriate parameters. 

\begin{defn}[Nonlinear projections]\label{assumption:big} For $1\le m< n$, a \textit{family of projection-type operators} will have the following objects associated to it:
\begin{itemize}
    \item a domain $\Omega$ contained in $\R^n$
    \item a codomain $X$ contained in a Euclidean space,
     a nonnegative integer $m$, and a {Borel} measure $h$ on $X$ such that
    $$h(B(x, \delta)) \gtrsim \delta^m$$
    for all $x \in X$ and $\delta \in (0, 1)$,
    \item an indexing set $A$ contained in an Euclidean space equipped with a compactly supported probability measure $\psi$,
    \item and a family of maps $\proj : \Omega \to X$ indexed by $\alpha \in A$ such that the function $(p, \alpha) \mapsto \proj(p)$ is continuous.
\end{itemize}
\end{defn}
In order to be transversal, we will require that the family of projections satisfies a compatibility condition for different parameters:
\begin{defn}[Transversality]\label{definition:s-transversal}
For a given $s \ge 0$, a family of maps $\{\proj : \alpha \in A\}$ satisfying Definition \ref{assumption:big} is called \emph{$s$-transversal} if there exists constants $c>0$ and $\de_0>0$ so that, for all distinct $x, y \in \Omega$ and $0<\de \le \de_0$, we have 
\begin{equation}\label{H2alt}
\psi\{\alpha : |\proj(x) - \proj(y)| \le \delta |x - y|\} < c\cdot \delta^m \cdot |x - y|^{m - s},
\end{equation}
or equivalently that
\begin{equation}\label{H2}
\psi\{\alpha : |\proj(x) - \proj(y)| \le \de\} < c \cdot\frac{\de^m}{|x - y|^s}.\end{equation}
\end{defn}

Although this definition is written with a tunable parameter $s$, our most important case will be when the parameter $s$ for transversality matches the dimension $m$ of the target space; in this case, the transversality condition reduces to
$$\psi \{\alpha : |\proj(x) - \proj(y)| \le \delta |x - y|\} \lesssim \delta^m.$$
We note that our definition has some points in common with Mattila's definition in \cite[Definition 18.1]{Mat15}, but that we do not require smoothness of the projections nor derivative bounds of non-zero order.

\subsection{Main results}\label{section_main}
The key uniting theme of our results is that for families of maps satisfying the transversality condition introduced in Definition \ref{definition:s-transversal}, the energies associated to a measure $\mu$ will be closely related to the energies of the pushforward measures $\widetilde{\pi_{\alpha}}_{\sharp} \mu$. As a demonstration of the techniques, we will begin by giving a brief formulation of part of the Marstrand projection theorem in the transversal setting: the dimension of a typical projection of a set with dimension $s < 1$ does not decrease. The proof of this fact, found in Section \ref{energy_section}, demonstrates the utility of examining the energy of pushforward measures and is similar to the presentation in \cite[Chapter 18]{Mat15}. (For the statement of the Marstrand projection theorem in the classic setting for orthogonal projections, as well as a formulation in higher dimensions, see \cite[Section 5.3]{Mat15}.)

\begin{prop}[Nonlinear Marstrand theorem]\label{thm:basic_marstrand}
Suppose that $\{\proj : \alpha \in A\}$ is a family of maps into an $m$-dimensional space supporting a measure $h$, as in Definition \ref{assumption:big}. If $E$ is a set with Hausdorff dimension $t \le m$ and the family of projections is $m$-transversal, then for $\psi$-almost every $\alpha \in A$ we have
\begin{equation}\label{first_part_mar} \dimh \proj E = t.\end{equation}
\end{prop}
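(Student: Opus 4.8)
The plan is to prove the two inequalities $\dimh \proj E \ge t$ and $\dimh \proj E \le t$ separately, with essentially all the work going into the first. The upper bound is the soft direction: the projection maps do not increase Hausdorff dimension (for the concrete families treated in later sections they are locally Lipschitz in the space variable, and Lipschitz maps cannot raise dimension), so $\dimh \proj E \le \dimh E = t$ for every $\alpha$. The lower bound --- the statement that a typical projection does not decrease dimension --- is where transversality enters, and I would obtain it through the energy method sketched in Section~\ref{energy_section}.

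For the lower bound, fix any $t' < t$. Since $\dimh E = t > t'$, Frostman's lemma furnishes a nonzero Borel measure $\mu$ supported on a compact subset of $E$ with finite energy $I_{t'}(\mu) < \infty$. Write $\nu_\alpha := {\proj}_\sharp \mu$ for the pushforward, which is supported on $\proj E$. By the definition of the pushforward in \eqref{pushforward} and Tonelli's theorem, averaging the energies of the $\nu_\alpha$ over the index set gives
\[
\int_A I_{t'}(\nu_\alpha)\, d\psi(\alpha) = \iint \left( \int_A \frac{d\psi(\alpha)}{|\proj(x) - \proj(y)|^{t'}} \right) d\mu(x)\, d\mu(y),
\]
so the problem reduces to the single-pair estimate $\int_A |\proj(x) - \proj(y)|^{-t'}\, d\psi(\alpha) \lesssim |x-y|^{-t'}$.

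To prove this, write $d = |x-y|$ and evaluate the inner integral by the distribution-function (layer-cake) formula, substituting the separation variable as a multiple $\delta d$ of $d$, which yields
\[
\int_A |\proj(x) - \proj(y)|^{-t'}\, d\psi(\alpha) = t'\, d^{-t'} \int_0^\infty \psi\{\alpha : |\proj(x) - \proj(y)| < \delta d\}\, \delta^{-t'-1}\, d\delta.
\]
I would then split the $\delta$-integral at the transversality threshold $\delta_0$. On $(0, \delta_0]$ the $m$-transversality bound \eqref{H2alt} with $s = m$ gives $\psi\{\cdots\} < c\,\delta^m$ (the factor $|x-y|^{m-s}$ is exactly $1$ here), so the contribution is $c \int_0^{\delta_0} \delta^{m - t' - 1}\, d\delta$, which converges precisely because $t' < t \le m$. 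On $(\delta_0, \infty)$ I would bound $\psi\{\cdots\} \le 1$ since $\psi$ is a probability measure, leaving the convergent tail $\int_{\delta_0}^\infty \delta^{-t'-1}\, d\delta$. Both pieces are finite constants, giving the single-pair bound and hence $\int_A I_{t'}(\nu_\alpha)\, d\psi(\alpha) \lesssim I_{t'}(\mu) < \infty$. Consequently $I_{t'}(\nu_\alpha) < \infty$ for $\psi$-almost every $\alpha$, and the standard energy--dimension principle forces $\dimh \proj E \ge t'$ on that full-measure set. Intersecting over a sequence $t' \uparrow t$ gives $\dimh \proj E \ge t$ for $\psi$-almost every $\alpha$, which combined with the upper bound proves \eqref{first_part_mar}.

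I expect the main obstacle to be the single-pair estimate, and in particular the bookkeeping of exponents: convergence of the layer-cake integral near $\delta = 0$ requires $t' < m$, which is where the hypothesis $t \le m$ and the matching of the transversality parameter $s$ to the target dimension $m$ are used, while the correct power $|x-y|^{-t'}$ on the right-hand side emerges only because the scaled form \eqref{H2alt} eliminates the $|x-y|$ dependence when $s = m$. The remaining subtleties --- measurability of the inner integral for Tonelli, and convergence of the tail as $\delta \to \infty$ --- are routine given that $\psi$ is a compactly supported probability measure and $(p,\alpha) \mapsto \proj(p)$ is continuous.
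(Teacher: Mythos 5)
Your proposal is correct and follows essentially the same route as the paper: Frostman's lemma to get a measure of finite $t'$-energy for each $t'<t$, averaging the pushforward energies, reducing to the single-pair estimate via the layer-cake formula, splitting at the transversality threshold $\delta_0$, and passing to a sequence $t'\uparrow t$. The only (minor) addition is that you explicitly record the upper bound $\dimh\proj E\le t$ via local Lipschitz continuity of the concrete projection families, a step the paper leaves implicit.
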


Developing the energy techniques further, we give more general asymptotic lower bounds on the average size of a projection. The next theorem serves as a direct generalization of Mattila's result Theorem \ref{theorem:mattila90_main}.

\begin{thm}[Average nonlinear projection length for neighborhoods]\label{main}
With the notation of Definition \ref{assumption:big}, assume
 that $\{\proj : \alpha \in A\}$ is an $m$-transversal family of projections into an $m$-dimensional space. 
 Fix a positive Borel probablity measure $\mu$ supported on a compact set $F \subseteq \Omega$, so that 
 $$\mu(B(x, r)) \lesssim r^t$$
for all $x \in \Omega$ and $0 < r < \infty$. 
\begin{itemize}
    \item If $t < m$, then
    $$\int_A h(\proj F(r)) \, d\psi(\alpha) \gtrsim r^{m - t}.$$
    \item If $t = m$, then
    $$\int_A h(\proj F(r)) \, d\psi(\alpha) \gtrsim (\log r^{-1})^{-1}.$$
\end{itemize}
\end{thm}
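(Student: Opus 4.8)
The plan is to follow Mattila's energy strategy, replacing orthogonal projections with the transversal family and tracking the target dimension $m$ throughout. First I would fix the scale $r$ and build an auxiliary probability measure $\mu_r$ on the $r$-neighborhood $F(r)$ by smoothing $\mu$ at scale $r$: concretely, set $\mu_r = \frac{1}{|B(0,r)|}\,\mu * \chi_{B(0,r)}$ (or an equivalent averaging against a bump supported in $B(0,r)$), so that $\mu_r$ is supported on $F(r)$, is a probability measure, and has density bounded by $\mu_r(x) \lesssim \mu(B(x,r))/r^n \lesssim r^{t-n}$. The point of this regularization is that $\mu_r$ is absolutely continuous, so each pushforward $\proj{}_\sharp \mu_r$ is absolutely continuous with respect to $h$ and we may speak of an $L^2(h)$ density.

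\medskip

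The heart of the argument is the Cauchy--Schwarz inequality relating the size of the support to the $L^2$ mass. For each $\alpha$, the pushforward $\nu_\alpha := \proj{}_\sharp \mu_r$ is a probability measure supported on $\proj F(r)$, so by Cauchy--Schwarz against $h$,
\begin{equation}\label{eq:cs}
1 = \nu_\alpha(X)^2 = \left(\int \frac{d\nu_\alpha}{dh}\, dh\right)^2 \le h(\proj F(r)) \cdot \int \left(\frac{d\nu_\alpha}{dh}\right)^2 dh.
\end{equation}
Rearranging and integrating in $\alpha$ gives the lower bound
$$\int_A h(\proj F(r))\, d\psi(\alpha) \ge \int_A \left(\int \left(\tfrac{d\nu_\alpha}{dh}\right)^2 dh\right)^{-1} d\psi(\alpha) \ge \left(\int_A \int \left(\tfrac{d\nu_\alpha}{dh}\right)^2 dh\; d\psi(\alpha)\right)^{-1},$$
where the last step is Jensen's inequality for the convex function $u \mapsto u^{-1}$ applied to the probability measure $\psi$. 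Thus it suffices to produce an \emph{upper} bound on the averaged energy $\displaystyle \mathcal{E}(r) := \int_A \int (d\nu_\alpha/dh)^2\, dh\; d\psi(\alpha)$, and then invert it.

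\medskip

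To bound $\mathcal{E}(r)$ I would identify $\int (d\nu_\alpha/dh)^2\, dh$ with an energy integral for $\mu_r$ under $\proj$. Writing the squared $L^2(h)$ norm via the measure-theoretic identity $\int (d\nu_\alpha/dh)^2 dh = \lim_{\delta \to 0} \delta^{-m} \iint_{|\proj(x)-\proj(y)| \le \delta}\, d\mu_r(x)\, d\mu_r(y)$ (using the lower regularity $h(B(x,\delta)) \gtrsim \delta^m$ to pass between $h$-density and $\delta$-thickenings), Fubini lets me integrate in $\alpha$ first:
$$\mathcal{E}(r) \lesssim \iint \left(\lim_{\delta \to 0} \delta^{-m}\, \psi\{\alpha : |\proj(x)-\proj(y)| \le \delta\}\right) d\mu_r(x)\, d\mu_r(y) \lesssim \iint \frac{d\mu_r(x)\, d\mu_r(y)}{|x-y|^m}.$$
The crucial input is the $m$-transversality condition \eqref{H2}, which produces exactly the kernel $|x-y|^{-m}$ after the $\alpha$-integration. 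So the averaged pushforward energy is controlled by the $m$-energy $I_m(\mu_r)$ of the smoothed measure back on the domain side. It then remains to estimate $I_m(\mu_r)$ using the density bound $\mu(B(x,\rho)) \lesssim \rho^t$: splitting the integral into $|x-y| \le r$ and $|x-y| > r$, the near-diagonal part contributes $\lesssim r^{t-m}$ when $t < m$ (dominating, since the far part is comparable), while when $t=m$ the standard dyadic decomposition yields $I_m(\mu_r) \lesssim \log r^{-1}$. Inverting through the chain above gives $\int_A h(\proj F(r))\, d\psi \gtrsim r^{m-t}$ for $t<m$ and $\gtrsim (\log r^{-1})^{-1}$ for $t=m$.

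\medskip

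The main obstacle I anticipate is making the identity between the $L^2(h)$-norm of the pushforward density and the $\delta$-thickened double integral fully rigorous in this nonlinear, low-regularity setting. Unlike the orthogonal case, here $h$ is only assumed to satisfy a one-sided lower bound $h(B(x,\delta)) \gtrsim \delta^m$ and the maps $\proj$ are merely jointly continuous, so I cannot differentiate or invoke a smooth coarea formula. The honest approach is to work at a fixed small scale $\delta$ throughout, defining a \emph{discretized} energy $\delta^{-m}\psi\{|\proj(x)-\proj(y)|\le\delta\}$, pairing the scale $\delta$ with $r$ (e.g. taking $\delta \sim r$ matched to the regularization), and running Cauchy--Schwarz against a $\delta$-net in $X$ rather than against $h$ directly; the lower regularity of $h$ converts net-counting bounds into $h$-measure bounds in the single direction we need. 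Carrying this discretization carefully, while controlling the passage between $\mu_r$ and the $\delta$-scale, is where the real work lies; everything else is the transversality hypothesis \eqref{H2} feeding a standard energy computation.
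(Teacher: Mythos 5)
Your proposal is correct and follows essentially the same route as the paper: an auxiliary absolutely continuous probability measure on a neighborhood of $F$ with $I_m \lesssim r^{t-m}$ (resp.\ $\log r^{-1}$), Cauchy--Schwarz applied to the pushforward densities, and $m$-transversality to convert the averaged $L^2(h)$ norm into the domain-side $m$-energy. The obstacle you flag at the end is handled in the paper by working with lower derivatives and Fatou's lemma rather than an exact identity --- only the one-sided bound $h(B(u,\delta)) \gtrsim \delta^m$ and an inequality in one direction are needed, so no discretization or $\delta$-net argument is required.
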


As a first application, we can phrase Theorem \ref{main} in the setting of radial projections and visibility defined in \eqref{radial} and \eqref{vis} respectively. 
\begin{thm}[Visibility for surfaces in $\R^n$]\label{theorem:smooth_surface_visibility}
Fix a set $E \subseteq \mathbb{R}^n$ of positive and finite $s$-dimensional Hausdorff measure, and consider a vantage set $A$ which is a piecewise smooth $(n-1)$-dimensional surface equipped with Hausdorff measure; assume that for all $a \in A$ and $e\in E$ we have $|a - e| \lesssim 1$. 
Finally, assume that there exists a positive $\rho$ such that for almost every $a \in A$ the tangent plane based at $a$ does not pass within distance $\rho$ of $E$.  The following statements hold:
\begin{itemize}
\item the family of radial projections $\{P_a : a \in A\}$ is $(n-1)$-transversal,
\item if $s < n - 1$ we have
$$\int_{A} \operatorname{vis}(a, E(r)) \, d\mathcal{H}^{n-1}(a)  \gtrsim r^{n - 1 - s},$$
\item and if $s = n - 1$ we have
$$\int_{A} \operatorname{vis}(a, E(r)) \, d\mathcal{H}^{n-1}(a) \gtrsim (\log r^{-1})^{-1}.$$
\end{itemize}
\end{thm}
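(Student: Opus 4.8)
The plan is to realize the radial family inside the abstract framework of Definition \ref{assumption:big}, to establish $(n-1)$-transversality by a direct geometric estimate, and then to invoke Theorem \ref{main} to read off the two visibility bounds. The only substantial work will be the transversality verification; everything else is bookkeeping.

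First I would fit the radial projections into Definition \ref{assumption:big} by taking the codomain $X = S^{n-1}$ with $m = n-1$ and $h = \mathcal{H}^{n-1}|_{S^{n-1}}$; since a spherical cap of radius $\delta$ has measure $\gtrsim \delta^{n-1}$, the required density bound $h(B(z,\delta)) \gtrsim \delta^{n-1}$ holds. The index set is the vantage surface $A$ with $\psi = \mathcal{H}^{n-1}|_A / \mathcal{H}^{n-1}(A)$, and the domain $\Omega$ is a bounded $r$-neighborhood of $E$ with $r < \rho/2$. Because $a$ lies on its own tangent plane, the hypothesis that $T_aA$ avoids a $\rho$-neighborhood of $E$ forces $\dist(a,E)\ge \rho$ for a.e. $a\in A$, so $(p,a)\mapsto P_a(p)$ is smooth on $\Omega\times A$, and $\vis(a,E(r)) = h(P_a(E(r)))$. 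To apply Theorem \ref{main} I would first replace $E$ by a compact upper $s$-regular subset $E'$ of positive $\mathcal{H}^s$ measure; this is legitimate since a set of positive finite $\mathcal{H}^s$ measure carries such a subset, and $E'\subseteq E$ gives $\vis(a,E(r)) \ge \vis(a,E'(r))$ by monotonicity of images. Then $\mu = \mathcal{H}^s|_{E'}/\mathcal{H}^s(E')$ is a Borel probability measure with $\mu(B(x,r))\lesssim r^s$, and Theorem \ref{main} with $t = s$ delivers the two stated lower bounds (the fixed finite constant $\mathcal{H}^{n-1}(A)$ converting between $\psi$ and $\mathcal{H}^{n-1}|_A$).

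The crux is the transversality estimate. Writing $\theta_a = \angle\, xay$, one has $|P_a(x)-P_a(y)| = 2\sin(\theta_a/2)$, which on the relevant sublevel set is comparable to $\theta_a$, so I am in the small-angle regime. Using the triangle-area identity $\sin\theta_a = \frac{|x-y|\,\dist(a,\ell_{xy})}{|x-a|\,|y-a|}$ together with $|x-a|,|y-a|\lesssim 1$, the sublevel set $\{a : |P_a(x)-P_a(y)|\le \delta\}$ is contained in the tube $T_\delta$ of radius $\lesssim \delta/|x-y|$ about the line $\ell_{xy}$ through $x$ and $y$. It then remains to prove $\mathcal{H}^{n-1}(A\cap T_\delta)\lesssim (\delta/|x-y|)^{n-1}$, which is exactly the content of \eqref{H2}.

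The \textbf{main obstacle} is converting the tangent-plane hypothesis into a uniform lower bound on the angle between the line $\ell_{xy}$ and the tangent plane $T_aA$ at points $a\in A\cap T_\delta$; without this the surface could be tangent to the tube and the measure estimate would fail. For such an $a$, let $\nu$ be the unit normal to $T_aA$, let $u = (x-y)/|x-y|$, and let $p$ be the foot of the perpendicular from $a$ to $\ell_{xy}$, so $|\langle p-a,\nu\rangle|\le \delta$. The hypothesis gives $|\langle x-a,\nu\rangle|, |\langle y-a,\nu\rangle|\ge \rho$. Since $p$ cannot equal both $x$ and $y$, choose whichever point, say $x$, satisfies $|x-p|\ge |x-y|/2 > 0$; decomposing $x-a = (x-p)+(p-a)$ with $x-p = \pm|x-p|\,u$ gives $|x-p|\,|\langle u,\nu\rangle| \ge \rho - \delta \ge \rho/2$, whence $|\langle u,\nu\rangle| \ge \frac{\rho}{2|x-p|}\gtrsim \rho$ using $|x-p|\lesssim 1$, provided $\delta\le\rho/2$. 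Thus $A$ meets every line $\ell_{xy}$ at an angle bounded below by $\sim\rho$, and projecting $A\cap T_\delta$ orthogonally onto a hyperplane perpendicular to $\ell_{xy}$ — a map with Jacobian $\gtrsim\rho$ and multiplicity bounded in terms of the fixed surface $A$ — yields $\mathcal{H}^{n-1}(A\cap T_\delta)\lesssim (\delta/|x-y|)^{n-1}$, which completes the transversality proof. The only additional care is that $A$ is merely piecewise smooth, so the normal $\nu$, the angle bound, and the projection estimate are all invoked at $\mathcal{H}^{n-1}$-a.e.\ $a$ and applied piece by piece.
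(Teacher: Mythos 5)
Your proof is correct and follows essentially the same route as the paper: the law-of-sines estimate placing the sublevel set $\{a:|P_a(x)-P_a(y)|\le\delta\}$ inside a tube around $\ell_{xy}$ is the paper's Lemma \ref{lemma:visibility_geometry}, the tube-measure bound derived from the tangent-plane hypothesis is the paper's Lemma \ref{lemma:tube_transversal} combined with the angle argument in its proof of the theorem, and the two lower bounds are then read off from Theorem \ref{main}. If anything, you supply two details the paper leaves implicit: the quantitative derivation of the angle bound $|\langle u,\nu\rangle|\gtrsim\rho$ from the tangent-plane condition, and the Frostman-type extraction of an upper $s$-regular measure from the hypothesis $0<\mathcal{H}^s(E)<\infty$ needed to feed Theorem \ref{main}.
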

The first claim of Theorem \ref{theorem:smooth_surface_visibility} is established in Section \ref{section_vis_check_trans} and the latter two claims are established in Section \ref{section_apps}.

In a similar manner, we can put this result in the context of Favard curve length defined in \eqref{FavG}. For curves in the plane, our techniques yield the following:
\begin{thm}[Favard curve length of neighborhoods]\label{FavTheorem}
Let $E$ be a compact set in the plane and $\Gamma$ a piecewise $\mathcal{C}^1$ curve with piecewise bi-Lipschitz continuous unit tangent vectors. Assume further that $E$ supports a Borel probability measure $\mu$ with the $t$-dimensional growth condition $\mu(B(x, r)) \lesssim r^t$ for all $x \in E, 0 < r < \infty$. The following statements hold:
\begin{itemize}
\item the family of curve projections $\Phi_{\alpha}$ is $1$-transversal,
\item if $t < 1$, then for all sufficiently small $r$ we have
$$\FavG(E(r)) \gtrsim r^{1 - t}$$
\item and if $t = 1$, then for all sufficiently small $r$ we have
$$\FavG(E(r)) \gtrsim (\log r^{-1})^{-1}.$$
\end{itemize}
\end{thm}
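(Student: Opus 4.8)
The plan is to obtain the last two bullets directly from Theorem \ref{main} once the transversality bullet is in hand, so the substance of the argument is the verification of transversality. To set up Theorem \ref{main}, I take $n = 2$, target dimension $m = 1$, codomain $X = \R$ with $h$ equal to one-dimensional Lebesgue measure (so that $h(B(x,\delta)) \sim \delta = \delta^m$), and index set a bounded interval $I \subseteq \R$, chosen large enough that $\Phi_\alpha(E(r)) = \emptyset$ for $\alpha \notin I$ and all small $r$ (possible since $E$ and $\Gamma$ are compact, so the $x$-projection of $E(r)+\Gamma$ is a bounded interval); equip $I$ with the normalized measure $\psi = |I|^{-1}\mathcal{L}|_I$. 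Since $h(\Phi_\alpha(E(r))) = |\Phi_\alpha(E(r))|$, the definition \eqref{FavG} gives $\FavG(E(r)) = \int_{\R}|\Phi_\alpha(E(r))|\,d\alpha = |I|\int_A h(\Phi_\alpha E(r))\,d\psi(\alpha)$, so the bounds $\FavG(E(r)) \gtrsim r^{1-t}$ for $t<1$ and $\FavG(E(r)) \gtrsim (\log r^{-1})^{-1}$ for $t = 1$ drop out of the two cases of Theorem \ref{main} applied to $\mu$.

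Before checking transversality I would reduce to the graph case. Decomposing $\Gamma$ into finitely many arcs (this is where the piecewise hypothesis is used, splitting at points of vertical tangency), each arc may be written as the graph of a $\mathcal{C}^1$ function $y = g(x)$ over an $x$-interval, with bi-Lipschitz unit tangent and hence bounded slope; on such an arc \eqref{phi_al} becomes the single-valued map $\Phi_\alpha(x,y) = y + g(\alpha - x)$, which is jointly continuous in $(\alpha,(x,y))$ as required by Definition \ref{assumption:big}. Because $\Phi_\alpha(E(r))$ is the union of the images over the arcs, we have $|\Phi_\alpha(E(r))| \ge |\Phi_\alpha^{(i)}(E(r))|$ for each arc $\Gamma_i$, so a lower bound produced by Theorem \ref{main} for a single arc already yields the desired lower bound for the full curve; likewise the transversality inequality \eqref{H2alt} may be verified one arc at a time and summed over the finitely many arcs.

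For the transversality estimate on a graph arc, fix distinct $p = (x_1,y_1)$ and $q = (x_2,y_2)$ and set $F(\alpha) := \Phi_\alpha(p) - \Phi_\alpha(q) = (y_1 - y_2) + g(\alpha - x_1) - g(\alpha - x_2)$, writing $\Delta x = x_1 - x_2$ and $\Delta y = y_1 - y_2$. I would split into two regimes governed by a constant $C_0 \sim \|g'\|_\infty$. If $|\Delta y| > C_0|\Delta x|$ (the nearly vertical case), then $|p-q| \sim |\Delta y|$ while $|g(\alpha - x_1) - g(\alpha - x_2)| \le \|g'\|_\infty|\Delta x| < \tfrac12|\Delta y|$, so $|F(\alpha)| \ge \tfrac12|\Delta y| \gtrsim |p-q|$ and the sublevel set in \eqref{H2alt} is empty once $\delta$ is small. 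In the complementary regime $|p-q| \sim |\Delta x|$, and I would invoke the tangent hypothesis: identifying the unit tangent direction with $\arctan g'$ shows that the bi-Lipschitz condition forces $g'$ itself to be bi-Lipschitz on the arc, whence $|F'(\alpha)| = |g'(\alpha - x_1) - g'(\alpha - x_2)| \ge c_1|\Delta x|$ with $F$ monotone. Consequently $\{\alpha : |F(\alpha)| \le \delta|p-q|\}$ is an interval of length $\lesssim \delta|p-q|/|\Delta x| \sim \delta$, and combining the two regimes gives $\psi\{\alpha : |F(\alpha)| \le \delta|p-q|\} \lesssim \delta$, which is exactly $1$-transversality.

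The main obstacle is this transversality step, and within it the translation of the geometric hypothesis ``piecewise bi-Lipschitz unit tangent'' into the usable analytic estimate $|g'(\alpha - x_1) - g'(\alpha - x_2)| \gtrsim |\Delta x|$ together with the monotonicity of $F$; the $\arctan$ correspondence and the restriction to subintervals of $I$ on which both $\alpha - x_1$ and $\alpha - x_2$ lie in a single bi-Lipschitz piece of $g'$ are where care is needed, although the resulting bookkeeping over finitely many pieces is routine. Everything else---the reduction to a single graph arc and the appeal to Theorem \ref{main}---is routine once transversality is established.
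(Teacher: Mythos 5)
Your proposal is correct, and its overall skeleton matches the paper's: decompose $\Gamma$ into graph arcs with bounded slope and bi-Lipschitz derivative, use $\FavG(E(r)) \ge \Fav_{\Gamma_i}(E(r))$ to reduce to a single arc, prove $1$-transversality there, and invoke Theorem \ref{main}. Where you genuinely diverge is in the transversality verification itself, and your route is substantially simpler than the paper's Lemma \ref{curve_trans}. The paper locates the intersection point $\mathbf{x}$ of the translated curves $\mathbf{a}+\Gamma$ and $\mathbf{b}+\Gamma$ (reducing to that configuration by a vertical shift when they are disjoint), and then proves the two-sided comparability $|\Phi_\lambda(\mathbf{a})-\Phi_\lambda(\mathbf{b})| \sim d_\lambda\,|\mathbf{a}-\mathbf{b}|$ through three cases on the relative sizes of $d_\lambda$ and $b_1-a_1$, using telescoping sums and the mean value theorem to keep the arguments of $\gamma'$ separated. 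You instead dispose of nearly vertical pairs (where the sublevel set in \eqref{H2alt} is empty once $\delta$ is below a uniform threshold, exactly as in the paper's $|b_1-a_1|<\frac12|b_2-a_2|$ subcase) and then observe that $F'(\alpha)=g'(\alpha-x_1)-g'(\alpha-x_2)$ satisfies $|F'(\alpha)|\ge \Lambda^{-1}|\Delta x|$ uniformly in $\alpha$ by the lower Lipschitz bound on $g'$; the mean value theorem then bounds the diameter of $\{\alpha: |F(\alpha)|\le \delta|p-q|\}$ by $2\Lambda\delta|p-q|/|\Delta x|\lesssim\delta$, with no need to locate an intersection point, no case split on $d_\lambda$ versus $|\Delta x|$, and no separate non-intersecting case. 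What your argument gives up is the two-sided estimate $|\Phi_\lambda(\mathbf{a})-\Phi_\lambda(\mathbf{b})|\sim d_\lambda|\mathbf{a}-\mathbf{b}|$, but as the authors themselves remark, only the lower bound is needed for transversality, so nothing is lost for this theorem. The points you flag as needing care (the $\arctan$ translation of the tangent hypothesis into bi-Lipschitz bounds on $g'$, and restricting the parameter interval so that $\alpha-x_1$ and $\alpha-x_2$ stay in a single bi-Lipschitz piece) are exactly the reductions the paper carries out in Definitions \ref{defn:curvature}--\ref{defn:curvature_simple} and the surrounding setup, so they are legitimately routine.
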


Next, we consider applications of Theorem \ref{main} to study self-similar sets such as $\mathcal{K}_n$, the $n$-th generation in the construction of the four corner Cantor set. Although they are not precisely the same as neighborhoods of $1$-sets, the sets $\K_n$ still support measures with easily computable density and Mattila's energy techniques can be adapted to estimate their visibilities \eqref{vis} and Favard curve lengths \eqref{FavG} from below. Our techniques are similarly amenable to such sets, and we will have the following corollaries:

\begin{cor}[Visibility of $\K_n$]\label{corollary:four_corner_visibility}
Suppose that $\Gamma$ is a smooth curve such that for any point $x \in [0, 1]^2$ and any $\gamma \in \Gamma$ we have $|x - \gamma| \sim 1$, and that no tangent line to $\Gamma$ passes through $[0, 1]^2$. Then
$$\int_{\Gamma} \vis(a, \mathcal{K}_n) \, d\mathcal{H}^1(a) \gtrsim \frac 1 n.$$
\end{cor}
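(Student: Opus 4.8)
The plan is to deduce the corollary from the critical ($t=m$) case of Theorem \ref{main}, taking the indexing family to be the radial projections $\{P_a : a \in \Gamma\}$ and exhibiting on each generation $\mathcal{K}_n$ a measure that plays the role of the scale-$4^{-n}$ regularized measure appearing in that theorem. Concretely, let $\mu_n = 4^n \, \mathcal{H}^2|_{\mathcal{K}_n}$ be the normalized Lebesgue measure on the $4^n$ squares of side $4^{-n}$ making up $\mathcal{K}_n$ (here $|\mathcal{K}_n| = 4^{-n}$, so $\mu_n$ is a probability measure with density $\sim 4^n$ with respect to planar Lebesgue measure). Since the support of $P_{a\sharp}\mu_n$ is exactly $P_a(\mathcal{K}_n)$, any lower bound for $\vis(a,\mathcal{K}_n) = \mathcal{H}^1(P_a(\mathcal{K}_n))$ obtained from the Cauchy--Schwarz/energy mechanism underlying Theorem \ref{main} will refer to $\mathcal{K}_n$ itself, with no loss coming from passing to a genuine neighborhood.

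First I would check that $\mu_n$ satisfies the $t=1$ growth hypothesis $\mu_n(B(x,\rho)) \lesssim \rho$ uniformly in $n$. For $\rho \le 4^{-n}$ a ball meets $O(1)$ of the squares, so $\mu_n(B(x,\rho)) \lesssim 4^n \rho^2 = \rho \cdot (4^n \rho) \le \rho$; for $\rho \ge 4^{-n}$ the one-dimensional arrangement of the squares gives $\mu_n(B(x,\rho)) \sim \rho \cdot 4^n \cdot 4^{-n} = \rho$. Thus $t = m = 1$, which is the critical exponent, and the implied constants do not depend on $n$. Next I would verify that $\{P_a : a \in \Gamma\}$ is $1$-transversal with constants independent of $n$, which is exactly the content of the first bullet of Theorem \ref{theorem:smooth_surface_visibility} (established in Section \ref{section_vis_check_trans}). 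The hypotheses there hold uniformly for the visible set $\mathcal{K}_n \subseteq [0,1]^2$: the standing assumption $|x-\gamma| \sim 1$ gives $|a-e| \lesssim 1$ for all $a \in \Gamma$, $e \in \mathcal{K}_n$, and since $\Gamma$ is a smooth (hence compact, with continuous unit tangent field) curve no tangent line of which meets $[0,1]^2$, a compactness argument furnishes a single $\rho>0$ with every tangent line of $\Gamma$ staying at distance $\ge \rho$ from $[0,1]^2 \supseteq \mathcal{K}_n$. The key point is that all of these constants depend only on $\Gamma$ and the cube, not on $n$.

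Finally, I would apply Theorem \ref{main} in the case $t = m = 1$ with $\mu = \mu_n$ and neighborhood scale $r = 4^{-n}$. The normalized arc length $\psi = \mathcal{H}^1|_{\Gamma}/\mathcal{H}^1(\Gamma)$ is a compactly supported probability measure on the index set, and since $\mathcal{H}^1(\Gamma)$ is a positive finite constant, integration against $d\mathcal{H}^1(a)$ and against $d\psi$ differ only by this factor; the critical bound therefore yields
\[
\int_{\Gamma} \vis(a,\mathcal{K}_n)\, d\mathcal{H}^1(a) \gtrsim \left(\log (4^{-n})^{-1}\right)^{-1} = (n \log 4)^{-1} \sim \frac{1}{n},
\]
as claimed.

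I expect the main obstacle to be the first step: justifying that $\mathcal{K}_n$, rather than a strictly larger neighborhood, is the set delivered by Theorem \ref{main} (note that the naive set inclusions between $\mathcal{K}_n$ and the Euclidean neighborhood $K(4^{-n})$ of the limiting Cantor set only run in the unhelpful direction). The clean resolution is to run the proof of Theorem \ref{main} with $\mu_n$ in place of the abstract regularized measure $\mu_r$: its density $\sim 4^n = r^{t-2}$ and its ball growth $\mu_n(B(x,\rho)) \lesssim \rho$ match precisely the properties of $\mu_r$ used there, while its support is $\mathcal{K}_n$, so that the Cauchy--Schwarz step produces $\mathcal{H}^1(P_a(\mathcal{K}_n))$ on the nose and the one-dimensional energy $I_1(\mu_n) \sim \log 4^n \sim n$ supplies the logarithmic factor. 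A secondary point to keep visible throughout is that every implied constant (growth of $\mu_n$, transversality of $\{P_a\}$, and the energy estimate) is uniform in $n$, since otherwise the rate $1/n$ would be meaningless.
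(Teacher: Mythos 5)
Your proposal is correct and follows essentially the same route as the paper: both establish $1$-transversality of $\{P_a : a \in \Gamma\}$ from the tangent-line condition and then apply the energy lower bound of Lemma \ref{theorem:general_favard} directly to the equidistributed measure on $\mathcal{K}_n$ (your $\mu_n$ is the paper's $\nu$, with $I_1(\nu) \sim n$ as in \eqref{butter}). Your closing observation---that one should run the Cauchy--Schwarz/energy mechanism on $\mathcal{K}_n$ itself rather than invoke Theorem \ref{main} for a neighborhood---is precisely how the paper resolves this.
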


\begin{cor}[Favard curve length of $\K_n$]\label{corollary:four_corner_favard} If $\Gamma$ is a piecewise $\mathcal{C}^1$ curve with piecewise bi-Lipschitz continuous unit tangent vectors, then
$$\FavG(\mathcal{K}_n) \gtrsim \frac 1 n.$$
\end{cor}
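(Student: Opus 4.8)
The plan is to deduce Corollary \ref{corollary:four_corner_favard} from Theorem \ref{FavTheorem} by applying it to the limiting Cantor set at the correct scale, and then transferring the resulting bound for a neighborhood of the limit set to the generation $\K_n$ itself.

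First I would work with the limiting four corner Cantor set $K = \bigcap_n \K_n$ rather than with $\K_n$ directly, precisely because $K$ carries its natural self-similar probability measure $\mu$ (assigning mass $4^{-n}$ to each of the $4^n$ squares of side $4^{-n}$ at generation $n$), and this measure is Ahlfors $1$-regular. In particular $\mu$ satisfies the growth bound $\mu(B(x,r)) \lesssim r$ for every $r > 0$ with a constant independent of scale, which is exactly the $t = 1$ hypothesis of Theorem \ref{FavTheorem}; note that no measure supported on $\K_n$ enjoys such a bound at all scales, since $\K_n$ has positive area. Since $\Gamma$ is assumed piecewise $\mathcal{C}^1$ with piecewise bi-Lipschitz unit tangent, Theorem \ref{FavTheorem} applies with $t = 1$ and yields $\FavG(K(r)) \gtrsim (\log r^{-1})^{-1}$ for all small $r$. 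Taking $r = 4^{-n}$ gives $\FavG(K(4^{-n})) \gtrsim (\log 4^n)^{-1} \sim 1/n$, which is the desired bound, but for the neighborhood $K(4^{-n})$ rather than for $\K_n$.

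The remaining, and main, task is to compare $\FavG(\K_n)$ with $\FavG(K(4^{-n}))$. The easy inclusion $\K_n \subseteq K(\sqrt{2}\,4^{-n})$ goes the wrong way; the content is the lower comparison $\FavG(\K_n) \gtrsim \FavG(K(4^{-n}))$, which is genuinely nontrivial because $K$ meets the inner edges of its defining squares, so $K(4^{-n})$ spills into the gaps of $\K_n$ and is \emph{not} contained in $\K_n$. To control this, I would use that $K \subseteq \K_n = \bigcup_i S_i$, where the $S_i$ are the $4^n$ squares of side $4^{-n}$, whence $K(4^{-n}) \subseteq \bigcup_i S_i(4^{-n})$; since each $\Phi_\al$ is Lipschitz with a constant $L_\al$ that is bounded uniformly over the compact range of relevant $\al$, this gives the pointwise inclusion $\Phi_\al(K(4^{-n})) \subseteq \Phi_\al(\K_n)(L_\al 4^{-n})$. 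The key structural input is the vertical-translation identity $\Phi_\al(x, y + \delta) = \Phi_\al(x,y) + \delta$, which shows that the image of each square $S_i$ contains an interval of length at least $4^{-n}$; consequently every one of the $N_\al$ connected components of $\Phi_\al(\K_n)$ has length at least $4^{-n}$, so $N_\al \le 4^n\,|\Phi_\al(\K_n)|$. Fattening the components by $L_\al 4^{-n}$ then increases the total length by at most $2 L_\al 4^{-n} N_\al \le 2 L_\al\,|\Phi_\al(\K_n)|$, so that $|\Phi_\al(K(4^{-n}))| \le (1 + 2L_\al)\,|\Phi_\al(\K_n)|$ for each such $\al$.

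Integrating this inequality over the compact range of $\al$ on which the projections are nontrivial, and using the uniform bound on $L_\al$, gives $\FavG(K(4^{-n})) \lesssim \FavG(\K_n)$; combined with $\FavG(K(4^{-n})) \gtrsim 1/n$ from the first step, this yields $\FavG(\K_n) \gtrsim 1/n$. I expect the main obstacle to be exactly this comparison: one must ensure that the spillover of $K(4^{-n})$ into the gaps of $\K_n$ cannot carry a disproportionate share of the projected length, and it is precisely the translation identity $\partial_y \Phi_\al \equiv 1$ and the uniform Lipschitz control on $\Phi_\al$, both furnished by the hypotheses on $\Gamma$, that rule this out.
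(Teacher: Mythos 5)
Your argument is correct, but it takes a genuinely different (and longer) route than the paper. The paper does not pass through the limit set $K$ or through Theorem \ref{FavTheorem} at all: it places the equidistributed measure $\nu$ directly on $\K_n$ (normalized Lebesgue measure on the $4^n$ squares, each carrying mass $4^{-n}$), checks that $\nu(B(x,u))\sim u^2 4^n$ for $u\le 4^{-n}$ and $\sim u$ for $u\ge 4^{-n}$, computes $I_1(\nu)\sim n$ by a direct distribution-function estimate, and then applies Lemma \ref{theorem:general_favard} --- which bounds $\int_A h(\proj F)\,d\psi$ from below by $I_s(\mu)^{-1}$ for the set $F$ \emph{itself}, with no neighborhood --- together with the transversality of the curve projections from Lemma \ref{curve_trans}. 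This sidesteps entirely the step you correctly identify as the main obstacle in your route, namely comparing $\FavG(K(4^{-n}))$ with $\FavG(\K_n)$. Your component-counting comparison does work: each $\Phi_\al(S_i)$ is an interval of length at least $4^{-n}$ by the identity $\Phi_\al(x,y+\delta)=\Phi_\al(x,y)+\delta$, so the number of components of $\Phi_\al(\K_n)$ is at most $4^n|\Phi_\al(\K_n)|$ and fattening by $O(4^{-n})$ only multiplies the length by a constant. That is a nice self-contained argument, and it buys you the ability to use the neighborhood theorem as a black box; what the paper's approach buys is brevity and the observation (made explicitly after the corollary) that the method extends to any dynamically generated set carrying a measure with computable $1$-energy. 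One genuine error in your write-up, though not load-bearing: the parenthetical claim that ``no measure supported on $\K_n$ enjoys such a bound at all scales, since $\K_n$ has positive area'' is false. The growth condition $\mu(B(x,u))\lesssim u$ is an \emph{upper} bound, and the equidistributed measure on $\K_n$ satisfies it at every scale with a constant independent of $n$ (for $u\le 4^{-n}$ one has $\nu(B(x,u))\lesssim u^2 4^n\le u$); this is exactly what makes the paper's direct route available. The real reason one cannot simply quote Theorem \ref{FavTheorem} with $E=\K_n$ is that its conclusion concerns $E(r)$ rather than $E$, not that the hypothesis fails.
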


Although these results are stated for the generations $\mathcal{K}_n$ specifically, there are substantial generalizations of the results. The core fact used in the proof is that $\mathcal{K}_n$ supports a measure with a specific density property; this behavior can be observed in a very broad family of $1$-dimensional fractal sets generated by iterated function systems.

Finally, we consider an application of Theorem \ref{main} for the Favard surface length, defined in \eqref{FavG_d}, when $d=3$. Although we do not state them here, there are natural generalizations of this result to arbitrary dimension.
\begin{thm}[Favard surface length of neighborhoods]\label{FavTheorem3d}
Let $E$ be a compact set in the plane and $\Gamma$ 
denote a surface in $\R^3$ defined by $\Ga = \{(t, \ga(t)): t\in I  \}$, where 
$\ga: \R^2 \rightarrow \R$, $\ga(s) = f(|s|)$, and $f: \R \rightarrow \R$ is a $C^2$ function on a non-empty compact interval $I$ satisfying $f(x) = f(-x)$, with $f''>0$ on $I$. Assume further that $E$ supports a Borel probability measure $\mu$ with the $t$-dimensional growth condition $\mu(B(x, r)) \lesssim r^t$ for all $x \in E, 0 < r < \infty$. The following statements hold:
\begin{itemize}
\item the family of curve projections $\Phi_{\alpha}$ is $1$-transversal,
\item if $t < 1$, then for all sufficiently small $r$ we have
$$\FavG(E(r)) \gtrsim r^{1 - t}$$
\item and if $t = 1$, then for all sufficiently small $r$ we have
$$\FavG(E(r)) \gtrsim (\log r^{-1})^{-1}.$$
\end{itemize}
\end{thm}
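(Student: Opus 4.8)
The plan is to verify that the family $\{\Phi_\al\}$ is $1$-transversal in the sense of Definition \ref{definition:s-transversal} (with $m = s = 1$) and then to read off both lower bounds directly from Theorem \ref{main}. Here the target is $\R$ with $h$ the $1$-dimensional Lebesgue measure (so $h(B(\be,\de)) \gtrsim \de$), the index set $A \subseteq \R^{2}$ is the bounded set of parameters $\al$ for which $\Phi_\al$ is defined on $E$, and $\psi$ is normalized Lebesgue measure on $A$. Since $\Ga$ is the graph of $\ga(s) = f(|s|)$, solving $\widetilde{x} + s = \al$ for the surface parameter $s$ gives the explicit formula $\Phi_\al(\vec{x}) = x_3 + f(|\al - \widetilde{x}|)$; consequently, for distinct $\vec{x}, \vec{y} \in \Omega$ the quantity governing transversality is
\[ g(\al) := \Phi_\al(\vec{x}) - \Phi_\al(\vec{y}) = (x_3 - y_3) + f(|\al - \widetilde{x}|) - f(|\al - \widetilde{y}|). \]
Writing $D := |\widetilde{x} - \widetilde{y}|$, I must show $\psi\{\al : |g(\al)| \le \de|\vec{x} - \vec{y}|\} \lesssim \de$.

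First I would split into two regimes according to whether the vertical or horizontal separation dominates. If $|x_3 - y_3| \ge 2\Lip(f)\,D$, then since $f$ is Lipschitz on the compact $I$ and $\big||\al - \widetilde{x}| - |\al - \widetilde{y}|\big| \le D$, the oscillating part of $g$ is at most $\Lip(f)\,D \le \tfrac12|x_3 - y_3|$, so $|g(\al)| \gtrsim |x_3 - y_3| \sim |\vec{x} - \vec{y}|$ uniformly in $\al$; hence for $\de$ below a fixed threshold the sublevel set is empty and there is nothing to prove. In the complementary regime $|x_3 - y_3| < 2\Lip(f)\,D$ we have $|\vec{x} - \vec{y}| \sim D$, and it suffices to prove that the planar sublevel set $\{\al \in A : |g(\al)| \le \eps\}$ has Lebesgue measure $\lesssim \eps/D$, where $\eps = \de|\vec{x}-\vec{y}|$.

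The heart of the matter is a uniform directional-derivative bound for $g$, and this is where I expect the main difficulty to lie. After a rotation and translation placing $\widetilde{x} = (-D/2,0)$ and $\widetilde{y} = (D/2,0)$, write $\al = (\xi, \eta)$ and set $r_u = |\al - \widetilde{x}|$, $r_v = |\al - \widetilde{y}|$. A direct computation gives $\partial_\xi g = h(a_u) - h(a_v)$, where $a_u = \xi + D/2$, $a_v = \xi - D/2$ are the first components of $\al - \widetilde{x}, \al - \widetilde{y}$ (so $a_u - a_v = D$) and $h(a) = a\,f'(\sqrt{a^2 + \eta^2})/\sqrt{a^2+\eta^2}$. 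Differentiating and using $a^2 + \eta^2 = r^2$ yields the key identity
\[ h'(a) = \frac{a^2}{r^2}\,f''(r) + \frac{\eta^2}{r^2}\cdot\frac{f'(r)}{r}, \qquad r = \sqrt{a^2 + \eta^2}. \]
Because $f$ is even, $f'$ is odd with $f'(0)=0$, and $f'' > 0$ forces $f'(r) \ge 0$ at the distances $r \ge 0$ in play; together with $f'' \ge \min_I f'' > 0$ and $f'(r)/r \ge c > 0$ on the compact set of relevant radii (the latter extending continuously to $f''(0) > 0$ at $r=0$), the right-hand side is a convex combination of two quantities bounded below by a positive constant independent of $a, \eta$. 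Hence $\partial_\xi g = \int_{a_v}^{a_u} h'(a)\,da \gtrsim D$, so for each fixed $\eta$ the map $\xi \mapsto g(\xi, \eta)$ is strictly increasing with derivative $\gtrsim D$, and its sublevel set $\{|g| \le \eps\}$ is an interval of length $\lesssim \eps/D$. Integrating over the bounded range of $\eta$ (Fubini) gives the planar bound $\lesssim \eps/D \sim \de$, establishing $1$-transversality; the two displayed lower bounds for $t < 1$ and $t = 1$ then follow immediately from Theorem \ref{main}.

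I would emphasize that the evenness and strict convexity of $f$, which together make both terms of $h'$ nonnegative and uniformly bounded below, are exactly what rescue the argument: without $f' \ge 0$ on $[0,\infty)$ the transverse term $\tfrac{\eta^2}{r^2}\cdot\tfrac{f'(r)}{r}$ could be large and negative near the perpendicular bisector of $\widetilde{x}, \widetilde{y}$, destroying the monotonicity that drives the sublevel-set estimate. The remaining technical point, handled via the compactness of $I$ and $E$, is to confirm that the radii $r_u, r_v$ and all intermediate radii in the integral over $[a_v, a_u]$ stay in the range where the bounds on $f'$ and $f''$ apply.
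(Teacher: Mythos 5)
Your proof is correct, and it reaches the same destination as the paper by the same overall skeleton --- reduce to $1$-transversality of $\{\Phi_\al\}$, exploit the rotational symmetry of $\ga(s)=f(|s|)$ to place $\widetilde{x}-\widetilde{y}$ along a coordinate axis, slice the two-dimensional parameter set into lines parallel to that axis, bound the sublevel set on each line, integrate via Fubini, and finish with Theorem \ref{main} --- but the core one-dimensional estimate is obtained by a genuinely different and arguably cleaner mechanism. The paper (Lemma \ref{lem_surface_maps_trans}) observes that each slice $\Ga\cap\{y=\la_2\}$ is a planar curve satisfying the simple curvature condition of Definition \ref{defn:curvature_simple} and then invokes the full planar transversality result, Lemma \ref{curve_trans}, whose proof runs through a three-case mean-value-theorem and telescoping analysis plus a separate non-intersection case. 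You instead differentiate the difference function $g(\al)=\Phi_\al(\vec{x})-\Phi_\al(\vec{y})$ directly along the slicing direction and show $\partial_\xi g=\int_{a_v}^{a_u}h'(a)\,da\gtrsim D$ via the identity $h'(a)=\tfrac{a^2}{r^2}f''(r)+\tfrac{\eta^2}{r^2}\tfrac{f'(r)}{r}$; this identity is precisely the paper's curvature bound \eqref{deriv_nonzero}, but you use it to get uniform monotonicity of $g$ on each line (so the sublevel set is a single interval of length $\lesssim\eps/D$), which collapses the entire case analysis into one step. Your preliminary split into the vertical-dominant regime ($|x_3-y_3|\ge 2\Lip(f)D$, where the sublevel set is empty for small $\de$) likewise replaces the paper's more delicate ``non-intersection case.'' What the paper's route buys is reuse of Lemma \ref{curve_trans}, which it needs anyway for Theorem \ref{FavTheorem}; what your route buys is a shorter, self-contained argument that makes transparent exactly which positivity ($f''>0$ together with $f'(r)/r\ge\min_I f''>0$, forced by evenness) drives transversality. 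The only loose ends --- confirming that all radii $|\al-\widetilde{x}|$ stay inside the interval where the bounds on $f'$ and $f''$ hold, and that the parameter set can be taken rotation-invariant --- are handled in the paper by fixing $A=\Om=\overline{B(0,L/3)}$, and you correctly flag them as routine.
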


The outline of the paper is as follows. In Section \ref{section:trans}, we will show how each of the aforementioned families of maps exhibit the required transversality properties.  Geometrically motivated proofs are given for each family. Section \ref{energy_section} develops the energy techniques necessary to study pushforward measures, beginning with an illustration of how a transversal family of maps can be used to prove a classical result of Marstrand. The proof of Theorem \ref{main}  appears in Section \ref{energy_section}. In Section \ref{section_apps}, we prove Theorems \ref{theorem:smooth_surface_visibility} and \ref{FavTheorem} as applications of Theorem \ref{main} paired with the transversality established in Section \ref{section:trans}, and we explore applications and sharpness examples.

\section{Establishing Transversality}\label{section:trans}
The aim of this section is to illustrate several families of projections that meet the transversality condition described in Definition \ref{definition:s-transversal}. This includes orthogonal, radial, curve, and surface projections.

\subsection{Orthogonal projections}
 Our first example of a transversal family is the collection of orthogonal projections from $\mathbb{R}^n$ to $\mathbb{R}^m$ for some $m < n$. To be explicit about the setup, we will consider a domain $\Omega = \mathbb{R}^n$, a codomain $X = \mathbb{R}^m$, and equip the codomain with the appropriate Lebesgue measure. We then have the family 
$$\{\iota_V \circ P_V : V \in G(n, m)\}$$
of projections indexed by the Grassmanian, where $P_V$ is the orthogonal projection into the $m$-plane $V$, and with the natural inclusion $\iota_V : V \to \mathbb{R}^m$; equip this set with the Haar measure $\gamma_{n, m}$. The full details of the construction of the Grassmanian manifold and the measure $\gamma_{n, m}$ can be found, for example, in \cite[Chapter 3]{Mat95}.

For establishing transversality, the core estimate in this context is contained in \cite[Lemma 2.7]{Mat95}: for any distinct points $x, y \in \mathbb{R}^n$,
\begin{equation} \label{equation:lemma27}
\gamma_{n, m} \left(\left\{V \in G(n,m) : |P_V(x - y)| \le \delta\right\} \right) \sim \frac{\delta^m}{|x - y|^m}
\end{equation}
Using the linearity of $P_V$, one can quickly establish 
\begin{lem}[Orthogonal projections are transversal]\label{lemma:orthogonal_transversal}
The family of orthogonal projections from $\mathbb{R}^n$ to $\mathbb{R}^m$ equipped with the Haar measure $\gamma_{n, m}$ is $m$-transversal.
\end{lem}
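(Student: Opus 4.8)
The plan is to reduce the two-point transversality condition \eqref{H2} with $s = m$ directly to the cited estimate \eqref{equation:lemma27}, using only the linearity of the orthogonal projections. First I would verify that the data $(\Omega, X, h, A, \psi, \{\iota_V \circ P_V\})$ genuinely fits Definition \ref{assumption:big}: the domain is $\Omega = \mathbb{R}^n$, the codomain is $X = \mathbb{R}^m$ carrying Lebesgue measure $h$, which satisfies $h(B(x,\delta)) = \omega_m \delta^m \gtrsim \delta^m$ so that the target dimension is indeed the $m$ we intend to transversality-match against; the index set $A = G(n,m)$ carries the Haar probability measure $\gamma_{n,m}$; and the map $(p, V) \mapsto \iota_V P_V(p)$ is continuous in the joint variable. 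This is routine bookkeeping, but worth recording precisely because the transversality parameter $s$ has to be lined up against this value $m$.

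The substantive step is the linearization. Since $P_V$ is linear and $\iota_V$ is a linear isometry onto its image, for any distinct $x, y \in \mathbb{R}^n$ we have $\iota_V P_V(x) - \iota_V P_V(y) = \iota_V P_V(x - y)$, and hence $|\iota_V P_V(x) - \iota_V P_V(y)| = |P_V(x - y)|$. Thus the sublevel set appearing in \eqref{H2} is exactly
$$\{V \in G(n,m) : |P_V(x - y)| \le \delta\},$$
and \eqref{equation:lemma27} bounds its $\gamma_{n,m}$-measure above by a constant multiple of $\delta^m / |x - y|^m$. This is precisely the right-hand side of \eqref{H2} with $s = m$, which establishes $m$-transversality; equivalently, one recovers \eqref{H2alt} after replacing $\delta$ by $\delta|x-y|$.

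The only point demanding a small remark is the range of $\delta$: the equivalence \eqref{equation:lemma27} is most natural for $\delta \lesssim |x - y|$, whereas Definition \ref{definition:s-transversal} asks only for $0 < \delta \le \delta_0$. When $\delta \ge |x - y|$ the sublevel set is all of $G(n,m)$, of measure $1$, while $\delta^m/|x-y|^m \ge 1$, so the inequality holds trivially for any $c \ge 1$; hence the bound extends to all relevant $\delta$. I do not anticipate a genuine obstacle here, since the entire analytic content is carried by the cited estimate \eqref{equation:lemma27} of \cite{Mat95}, and the lemma amounts to the observation that linearity lets us apply that estimate verbatim after passing to the difference $x - y$.
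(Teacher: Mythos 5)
Your proposal is correct and follows exactly the route the paper itself indicates: the paper gives no further proof beyond the remark that the lemma follows ``using the linearity of $P_V$'' from the cited estimate \eqref{equation:lemma27} of \cite[Lemma 2.7]{Mat95}, which is precisely your linearization step $|\iota_V P_V(x) - \iota_V P_V(y)| = |P_V(x-y)|$. Your additional checks (that the data fit Definition \ref{assumption:big} and that the case $\delta \ge |x-y|$ is trivial) are sensible bookkeeping that the paper leaves implicit.
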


As in \cite{Mat95}, this can be done geometrically, by reducing to an estimate of the $m$-dimensional measure of a patch on a sphere. There is also an important probabilistic interpretation, which will turn out to be the main ingredient when studying other transversal families. If $x$ and $y$ are fixed points in $\mathbb{R}^n$, then a randomly chosen $m$-dimensional plane is likely to preserve some, if not most, of the distance between $x$ and $y$; that is, on average we have that $|P_V(x) - P_V(y)| \ge \delta |x - y|$. However, there is still an exceptional set of $m$-planes which do not respect this inequality at scale $\delta$ -- for example, any $m$-plane which is sufficiently close to lying in the orthogonal complement to the line between $x$ and $y$. Transversality comes from controlling the $\gamma_{n, m}$-measure of the exceptional set for scale $\delta$.

\subsection{Visibility}\label{section_vis_check_trans}
We now turn to establishing the transversality condition for families of radial maps. We begin by first recalling the notation defined in Section \ref{visibility_summary}.   
For a point $a$ in $\mathbb{R}^n$, the radial projection based at $a$ maps $\mathbb{R}^n \setminus \{a\} \to \mathbb{S}^{n - 1}$ via
$$P_a(x) = \frac{x-a}{|x-a|}.$$
For a fixed vantage set $A\subset \mathbb{R}^n$ equipped with a measure $\psi$, our family of projections will be $\{P_a : a \in A\}$. The common domain will be a visible set $E$, which will be assumed to be disjoint from $A$.  Our codomain is $\mathbb{S}^{n - 1}$ equipped with the surface measure and so $m= n - 1$ and $P_a:  E\rightarrow S^{n-1}$.
The aim of this section is to 
establish some minimal geometric relations between the vantage set $A$ with the measure $\psi$ and the visible set $E$ so that the family $\{P_a: a\in A\}$ is $(n-1)$-transversal.  A natural condition on the probability measure $\psi$ will arise after we analyze the geometry of the radial projections. 

To this end, we will make use of the following geometric lemma. A two-dimensional variant appeared in the work of Bond, {\L}aba, and Zahl \cite[Lemma 2.3]{BLZ}; we will provide a somewhat different proof and generalize the result to higher dimensions.

\begin{lem}[Visibility and tubes]\label{lemma:visibility_geometry}
Fix a scale $R>0$ and two points $x, y$ not contained in the vantage set $A$ with $|x - y| \le R$. Let $L_{x, y}$ denote the line connecting them. Then there exists a constant $C < \infty$ depending only on $R$ such that
$$\{a\in  A  : |P_a(x) - P_a(y)| \le \delta |x - y|\} \cap B(x, R) \subseteq L_{x, y}(C \delta),$$
where $ L_{x, y}(C \delta)$ denotes the $C\de$-neighborhood of the line $L_{x,y}$. 
\end{lem}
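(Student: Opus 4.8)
The plan is to reduce the statement to elementary planar trigonometry inside the plane determined by the three points $x$, $a$, and $y$. The key observation is that $P_a(x)$ and $P_a(y)$ are unit vectors pointing from $a$ toward $x$ and $y$ respectively, so the chordal distance $|P_a(x) - P_a(y)|$ on the sphere is governed by the angle $\theta := \angle xay$ subtended at the vantage point $a$ by the segment $xy$. Specifically, two unit vectors separated by angle $\theta \in [0, \pi]$ satisfy $|P_a(x) - P_a(y)| = 2\sin(\theta/2)$, and since $\cos(\theta/2) \le 1$ this yields the one-sided bound $\sin\theta \le |P_a(x) - P_a(y)|$. Thus the hypothesis $|P_a(x) - P_a(y)| \le \delta |x - y|$ forces $\sin\theta \le \delta |x - y|$: if $x$ and $y$ look close from $a$, then $a$ subtends a small angle, which means $a$ is nearly collinear with $x$ and $y$, i.e. close to the line $L_{x,y}$.

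To make the last step quantitative, I would compute the area of the triangle with vertices $x$, $a$, $y$ in two ways. Taking $xy$ as the base, the area equals $\tfrac12 |x - y| \cdot d$, where $d := \dist(a, L_{x,y})$ is exactly the quantity we wish to bound. Taking the two sides meeting at $a$, the area equals $\tfrac12 |a - x|\,|a - y|\sin\theta$. Equating these gives the identity
$$d = \frac{|a - x|\,|a - y|}{|x - y|}\,\sin\theta,$$
and combining with $\sin\theta \le \delta|x - y|$ produces $d \le \delta\,|a - x|\,|a - y|$.

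It then remains only to control the two distance factors using the scale hypotheses, which is where the localization to $B(x, R)$ and the assumption $|x - y| \le R$ enter. Since $a \in B(x, R)$ we have $|a - x| \le R$, and the triangle inequality gives $|a - y| \le |a - x| + |x - y| \le 2R$. Substituting into the previous bound yields $d \le 2R^2\,\delta$, so the conclusion holds with $C = 2R^2$, a constant depending only on $R$, which shows $a \in L_{x,y}(C\delta)$.

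I do not expect a serious obstacle here: the argument is a short chain of exact identities followed by crude distance estimates. The only points requiring care are the trigonometric relation between the spherical chord and the half-angle, together with the resulting one-sided inequality $\sin\theta \le |P_a(x) - P_a(y)|$, which is precisely what lets us avoid needing any lower bound on $\theta$; and the bookkeeping of the factors $|a - x|$ and $|a - y|$, whose uniform control is exactly what motivates both the restriction to $B(x, R)$ and the dependence of the final constant on $R$. Degenerate configurations (collinear $x$, $a$, $y$) are harmless, since then $d = 0$, and the distinctness of $x$ and $y$ is implicit in the definition of $L_{x,y}$.
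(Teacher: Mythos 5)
Your proof is correct and is essentially the same triangle-geometry argument as the paper's: both control the distance from $a$ to $L_{x,y}$ by the angle $\theta=\angle xay$ subtended at the vantage point, using the bounds $|a-x|,|a-y|\lesssim R$. The only differences are cosmetic — you argue directly via the exact chord identity $|P_a(x)-P_a(y)|=2\sin(\theta/2)$ and the two-ways area computation (which yields the explicit constant $C=2R^2$), whereas the paper argues by contrapositive via the law of sines and the comparability $|P_a(x)-P_a(y)|\sim\theta$.
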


\begin{proof}
We proceed by contrapositive. Suppose that $a$ is within the ball $B(x, R)$ but outside the tube $L_{x, y}(\rho)$ of radius $\rho$ around $L_{x, y}$. Draw a  triangle with vertices $x, y$, and $a$; let $\theta$ denote the internal angle at vertex $a$ and $\gamma$ denote the internal angle at vertex $y$. 
%By the geometry of the radial projection, it is immediate to verify that 
Since $|P_a(x) - P_a(y)|$ is comparable to the internal angle $\theta$ of the triangle, it is sufficient to give a lower bound on the angle $\theta$. By the law of sines, we have that
$$\frac{\sin \theta}{|x - y|} = \frac{\sin \gamma}{|a - x|}$$
so that
$$\theta \ge \sin \theta = \frac{\sin \gamma}{|a - x|} |x - y|.$$
If $\mathcal{A}_{x, y, a}$ denotes the altitude of the triangle (as viewed with base side $\overline{xy}$) then
$$\sin \gamma = \frac{\mathcal{A}_{x, y, a}}{|y - a|}$$
and
$$\theta \geq \frac{\mathcal{A}_{x, y, a} \cdot |x - y|}{|a - x| \cdot |a - y|}.$$

Since $a, x, y \in B(x, R)$, we have that $|a - x| \le 2R$ and $|a - y| \le 2R$. 
 Since $a$ lies outside the tube $L_{x, y}(\rho)$, the altitude must be at least $\rho$. Therefore, there exists a constant $c \sim 1$ for which
$$|P_a(x) - P_a(y)| \ge c \theta \geq c \cdot  \frac{\rho}{4R^2} \cdot |x - y|.$$

Choosing $\rho = C\delta$ 
for $C>4R^2/c$
establishes that $|P_a(x) - P_a(y)| > \delta |x - y|$, as desired.
\end{proof}

We now have a natural condition to impose on the probability measure $\psi$: as we wish to verify \eqref{H2alt} with $s=m=n-1$, then Lemma \ref{lemma:visibility_geometry} implies that the measure of a tube should be bounded by the radius of the tube to an appropriate power. 
To be precise, we will say that $\psi$ satisfies the \textbf{tube condition with respect to $E$} if for any tube $T_{\delta}$ with sufficiently small radius $\delta$ that passes through the visible set, $E$, we have
\begin{equation}\label{equation:tube}
\psi(T_{\delta}) \lesssim \delta^{n - 1}.
\end{equation}
In this case, we have established that, provided the distance from $A$ to $E$ is at most $R$,  $\{P_a :a \in A\}$   is a family of maps from an $n$-dimensional space to an $(n - 1)$-dimensional space with
$$\psi\{a \in A : |P_a(x) - P_a(y)| \le \delta|x - y|\} \lesssim \delta^{n - 1}.$$
Comparing this to the definition of transversality, we have established the following:

%GENERAL LEMMA
\begin{lem}[Radial maps are transversal]
\label{lemma:tube_transversal}
Fix a scale $R>0$.  Fix a vantage set $A$ and a visible set $E$ with the condition that for all $a \in A$ and $e\in E$
we have $|a - e| \lesssim 1$. 
If $A$ is equipped with a measure $\psi$ satisfying the tube condition with respect to $E$ \eqref{equation:tube}, then the family $\{P_a : a \in A\}$ is $(n-1)$-transversal as in \eqref{H2}. 
\end{lem}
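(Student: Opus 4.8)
The plan is to verify the transversality condition directly in the form \eqref{H2alt} with $s = m = n-1$, which amounts to the single estimate
$$\psi\{a \in A : |P_a(x) - P_a(y)| \le \delta|x-y|\} \lesssim \delta^{n-1}$$
for all distinct $x, y \in E$ and all sufficiently small $\delta$; the equivalent form \eqref{H2} then follows by the rescaling $\delta \mapsto \delta|x-y|$ already recorded in Definition \ref{definition:s-transversal}. First I would fix distinct $x, y \in E$ and use the standing hypothesis $|a - e| \lesssim 1$ to select a scale $R$, depending only on the implicit constant in that hypothesis, so large that every $a \in A$ satisfies $|a - x| \le R$ and also $|x - y| \le R$ (the latter because, fixing any $a_0 \in A$, the bound gives $|x-y| \le |x-a_0| + |a_0 - y| \lesssim 1$, so $E$ has diameter $\lesssim 1$). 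With this choice $A \subseteq B(x, R)$, so the restriction to $B(x,R)$ appearing in Lemma \ref{lemma:visibility_geometry} is vacuous over the index set $A$.

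Next I would invoke Lemma \ref{lemma:visibility_geometry} with this $R$. Since $E$ is disjoint from $A$, the points $x, y \notin A$ and the lemma applies, producing a constant $C < \infty$ depending only on $R$ for which
$$\{a \in A : |P_a(x) - P_a(y)| \le \delta|x-y|\} \subseteq L_{x,y}(C\delta),$$
the $C\delta$-neighborhood of the line through $x$ and $y$. Because $x \in E$ lies on $L_{x,y}$, this tube passes through the visible set $E$; hence for $\delta$ small enough that $C\delta$ is an admissible radius, the tube condition \eqref{equation:tube} gives $\psi(L_{x,y}(C\delta)) \lesssim (C\delta)^{n-1} \lesssim \delta^{n-1}$. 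Monotonicity of $\psi$ then yields the displayed estimate, establishing \eqref{H2alt} and hence $(n-1)$-transversality.

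The genuine geometric work has already been absorbed into Lemma \ref{lemma:visibility_geometry}, so the only real obstacle is bookkeeping: upgrading the \emph{localized} containment of that lemma (valid only inside $B(x,R)$) to a statement about all of $A$. This is exactly where the uniform bound $|a - e| \lesssim 1$ enters — it confines the entire vantage set to a single ball of bounded radius about any point of $E$, so no part of the exceptional set escapes the region in which Lemma \ref{lemma:visibility_geometry} has force, and a single $R$ (hence a single $C$) works uniformly over all pairs $x,y$. A secondary point to confirm is that $L_{x,y}(C\delta)$ genuinely meets $E$, so that \eqref{equation:tube} is applicable; this is immediate since its core line passes through $x \in E$.
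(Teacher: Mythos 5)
Your argument is correct and is essentially the paper's own proof: the paper likewise derives the lemma by combining Lemma \ref{lemma:visibility_geometry} with the tube condition \eqref{equation:tube} to verify \eqref{H2alt} with $s=m=n-1$. Your write-up simply makes explicit the bookkeeping the paper leaves implicit (choosing $R$ from the hypothesis $|a-e|\lesssim 1$ so that $A\subseteq B(x,R)$, and noting that $L_{x,y}(C\delta)$ meets $E$ because $x\in E$), so no further comparison is needed.
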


This gives a substantial degree of flexibility in structuring the vantage set. One application of this technique is to a vantage set which is made up of a smooth curve $\Gamma$ whose tangent lines do not come too close to the visible set. When $\psi$ is taken to be the restriction of $\mathcal{H}^{n - 1}$ to the vantage set $A$, this will imply that $\psi$ satisfies the tube condition with respect to $E$. We discuss this idea more in Section \ref{section:main_proofs}.

%%%%%%%%%%%%%%%%%%%%%%%%%%%%%%%%%%%%%%%%%%%%%%
\subsection{Favard curve length}\label{Favard curve section}
In this section, we verify that the family of maps $\Phi_\la:\R^2 \rightarrow \R$ introduced in \eqref{phi_al} satisfy the transversality condition of Definition \ref{definition:s-transversal}. 
This will proceed through a couple of reductions. First, we will set up some basic assumptions on the smoothness of the curve as well as some notation.  
Next, by breaking the curve into simpler pieces, we reduce to the case of a curve that is a graph satisfying a simpler curvature condition. 
We establish transversality in this simpler setting and note  this is sufficient to establish lower bounds on the Favard length for the general setting. 

% ASSUMPTIONS ON THE CURVE
\begin{defn}\label{defn:curvature}
We say that $\Ga$ satisfies our \emph{standard curvature condition} if $\Ga$ is a piecewise $\mathcal{C}^1$ curve with piecewise bi-Lipschitz continuous unit tangent vectors.
\end{defn}

Under the assumptions of Definition \ref{defn:curvature}, $\Ga$ can be expressed as a disjoint union of continuous subcurves $\Ga = \bigcup_{i=1}^{\infty} \Ga_i$, where each $\Ga_i$ is $\mathcal{C}^1$ of finite length with a bi-Lipschitz continuous unit tangent vector.
By further decomposition of the curve, each $\Ga_i$ can be expressed either as a graph with respect to the first coordinate, $\Ga_i= \{(t, \ga_i(t)): t\in I_i\}$, or as a graph with respect to the second coordinate, $\Ga_i= \{(\ga_i(t), t): t\in I_i\}$, so that $\sup_{t\in I_i}|\ga'_i (t) | \le 1$, and $\ga'_i$ 
is $\la_i$-bi-Lipschitz.

In order to obtain lower bounds on $\Fav_{\Ga}(E)$, where $E$ will denote a compact subset of $\R^2$, 
since $\Fav_\Ga(E) \geq \Fav_{\Ga_i}(E)$ for each $i$,
 it suffices to obtain lower bounds on $\Fav_{\Ga_i}(E)$.
Fixing $i$ and observing that 
rotating the curve and the set $E$ by the same amount has no affect on $\Fav_{\Ga_i}(E) = |E + \Ga_i|$, 
we may simply assume that $\Ga_i$ is a graph with respect to the first coordinate. Finally, for ease of notation, we drop the subscript $i$ and assume that $\Ga$ has all the properties of $\Ga_i$.  

\begin{defn}\label{defn:curvature_simple}
 We say that $\Ga$ is a curve satisfying the \emph{simple curvature condition} if $\Ga= \{(t, \ga(t)): t\in I\}$, where $\ga:\R\rightarrow \R$, 
 \begin{equation}\label{deriv_cond}
 \sup_{t\in I}|\ga'(t) | \le 1,
 \end{equation}
 and 
$\ga'$ 
is $\La$-bi-Lipschitz satisfying 
\begin{equation}\label{Lips_cond}
\La^{-1} |s-t| \le |\ga'(s) - \ga'(t)| \le \La |s-t|
\end{equation}
for some $0<\La<\infty$ and for each $s,t$ in a non-trivial closed interval $ I$.
 \end{defn}

Let $\Ga= \{(t, \ga(t)): t\in I\}$ be a curve satisfying the simple curvature condition of Definition \ref{defn:curvature_simple}.  
Note that condition \eqref{Lips_cond} guarantees that $\ga'$ is monotonic; without loss of generality, we will assume that $\Ga$ is concave down so that if $t<s$, then 
\begin{equation}\label{convex_cond}
\frac{ \ga'(s) - \ga'(t)  }{s-t } <0.
\end{equation}
Write $I=[L_1, L_2]$ for some $L_1< L_2$ and set $h= \frac{L_2- L_1}{2}$. 
Set $\Om= [0, h]^2 \subset \R^2$
and
$A=\left[L_1 + h, L_2 \right].$
With this set up, for each $\la \in A$ and $\mathbf{a} \in \Om$, 
$$\ell _\la\cap(\mathbf{a}+\Gamma)  = (\la, a_2 + \ga(\la - a_1) ) $$ is a singleton, as in Figure \ref{a11}, and we can define the one-parameter family of mappings $\{\Phi_\la(\mathbf{ \cdot })\}_{\la\in A}$, 
$\Phi_\la:\Om\to \ell _\la$
 by
 \begin{equation}\label{remind_phi}
\Phi_\la(\mathbf{a}) = a_2 + \ga(\la - a_1).
 \end{equation}

\begin{figure}[h]
  \centering
  \includegraphics[width=6cm]{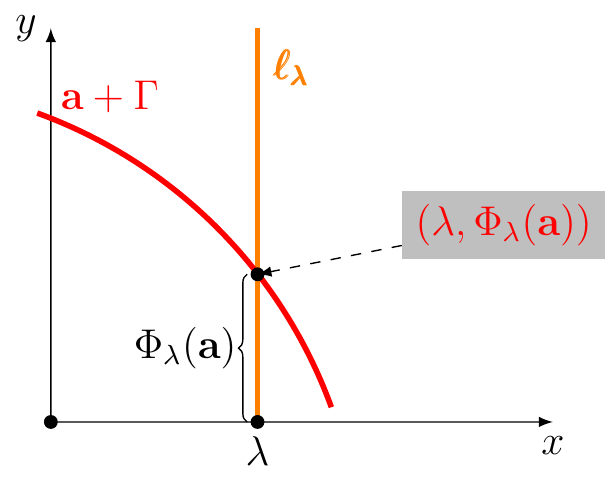}
\caption{$\Phi_\lambda(\mathbf{a})$}\label{a11}
\end{figure}

We are now ready to show that the simple curvature assumption implies $1$-transversality.  In line with Definition \ref{assumption:big}, our codomain is $\R$ equipped with the $1$-dimensional Lebesgue measure and so $m=1$.

\begin{lem}[Curve maps are transversal]\label{curve_trans}
Let $\Gamma$ be a curve satisfying the simple curvature assumption of Definition \ref{defn:curvature_simple}. 
Equip the parameter space $A$ with the $1$-dimensional Lebesgue measure. 
Then the associated family of projections  $\{\Phi_{\la}:\Om\rightarrow \R : \la \in A\}$
is $1$-transversal as in \eqref{H2}. 
\end{lem}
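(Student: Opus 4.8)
The plan is to reduce the transversality estimate to a one–variable statement about the auxiliary function
$$g(\la) := \Phi_\la(x) - \Phi_\la(y) = (x_2 - y_2) + \ga(\la - x_1) - \ga(\la - y_1),$$
where $x = (x_1, x_2)$ and $y = (y_1, y_2)$ are distinct points of $\Om$ and $\Phi_\la$ is as in \eqref{remind_phi}. Note first that the setup is arranged so that for $\la \in A$ and $x_1, y_1 \in [0,h]$ both arguments $\la - x_1$ and $\la - y_1$ lie in $I$, so every bound below is legitimate. I would verify the equivalent transversality condition \eqref{H2alt} with $m = s = 1$, namely $\psi\set{\la : \abs{g(\la)} \le \de\abs{x-y}} < c\,\de$ for all $\de \le \de_0$; the form \eqref{H2} then follows by the routine rescaling $\de \mapsto \de/\abs{x-y}$. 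Differentiating gives $g'(\la) = \ga'(\la - x_1) - \ga'(\la - y_1)$, and applying the bi-Lipschitz hypothesis \eqref{Lips_cond} with $s = \la - x_1$, $t = \la - y_1$ (so $s - t = y_1 - x_1$) yields
$$\La^{-1}\abs{x_1 - y_1} \le \abs{g'(\la)} \le \La\abs{x_1 - y_1}.$$
Moreover, since \eqref{convex_cond} forces $\ga'$ to be strictly monotone, the sign of $g'$ is independent of $\la$, so $g$ is strictly monotone on $A$. This monotonicity is the structural fact driving the whole estimate.

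I would then split into two cases according to how much of the separation $\abs{x - y}$ is horizontal. When $\abs{x_1 - y_1} \ge \tfrac12\abs{x - y}$, the derivative lower bound gives $\abs{g'(\la)} \ge \tfrac{1}{2\La}\abs{x-y} > 0$, so $g$ is genuinely strictly monotone and continuous; hence its sublevel set $\set{\la : \abs{g(\la)} \le \eta}$ is an interval, and comparing two of its endpoints via the mean value theorem bounds its length by $2\eta/\min\abs{g'}$. Taking $\eta = \de\abs{x-y}$ gives
$$\psi\set{\la : \abs{g(\la)} \le \de\abs{x-y}} \le 4\La\,\de,$$
which is the desired strict bound \eqref{H2alt} upon taking $c$ to be any constant exceeding $4\La$.

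The remaining case $\abs{x_1 - y_1} < \tfrac12\abs{x - y}$ is where the horizontal derivative bound degenerates, and it is the step I expect to require the most care. Here I would instead exploit that the vertical separation dominates: $\abs{x_2 - y_2} > \tfrac{\sqrt3}{2}\abs{x-y}$, while the $1$-Lipschitz bound \eqref{deriv_cond} on $\ga$ forces $\abs{\ga(\la - x_1) - \ga(\la - y_1)} \le \abs{x_1 - y_1} < \tfrac12\abs{x-y}$. The triangle inequality then gives the uniform lower bound
$$\abs{g(\la)} \ge \abs{x_2 - y_2} - \abs{\ga(\la - x_1) - \ga(\la - y_1)} > \tfrac{\sqrt3 - 1}{2}\abs{x-y}$$
for every $\la$, so the sublevel set is empty once $\de < \tfrac{\sqrt3 - 1}{2}$. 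Choosing $\de_0$ below this threshold makes \eqref{H2alt} hold trivially in this regime (including the degenerate subcase $x_1 = y_1$, where $g$ is constant), and combining the two cases completes the verification of $1$-transversality. The only genuine subtlety is this near-vertical case; the good case is essentially a mean value theorem argument once monotonicity is in hand.
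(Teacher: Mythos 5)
Your proof is correct, and it takes a genuinely different and noticeably shorter route than the paper. The paper first splits on whether the translated curves $(\mathbf{a}+\Gamma)$ and $(\mathbf{b}+\Gamma)$ intersect; in the intersecting case it measures everything against the distance $d_\la$ from the intersection point to the vertical line $\ell_\la$ and proves the two-sided comparison $\abs{\Phi_\la(\mathbf{a})-\Phi_\la(\mathbf{b})}\sim d_\la\abs{\mathbf{a}-\mathbf{b}}$ via a three-case telescoping mean value theorem argument, then reduces the non-intersecting case to this one by a vertical shift. You instead work directly with $g(\la)=\Phi_\la(x)-\Phi_\la(y)$ and observe that $g'(\la)=\ga'(\la-x_1)-\ga'(\la-y_1)$ has constant sign and, by the bi-Lipschitz hypothesis \eqref{Lips_cond}, satisfies $\La^{-1}\abs{x_1-y_1}\le\abs{g'(\la)}\le\La\abs{x_1-y_1}$ uniformly in $\la$; your dichotomy is then on whether the separation of $x$ and $y$ is mostly horizontal (so $\abs{g'}\gtrsim\abs{x-y}$ and monotonicity bounds the sublevel set's length by $4\La\de$) or mostly vertical (so the gradient bound \eqref{deriv_cond} forces $\abs{g}\ge\tfrac{\sqrt3-1}{2}\abs{x-y}$ everywhere and the sublevel set is empty for $\de<\de_0$). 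Both arguments use the two hypotheses in the same roles, and your version suffices for every application in the paper, since only the upper bound on the measure of the exceptional set is ever used (the paper itself remarks that only one direction of its comparison is needed). What the paper's longer route buys is the sharper geometric statement that the exceptional parameters form a $\de$-neighborhood of the slice through the intersection point, with $\abs{\Phi_\la(\mathbf{a})-\Phi_\la(\mathbf{b})}$ growing linearly in $d_\la$; what yours buys is brevity, explicit constants, and the elimination of the intersecting/non-intersecting case split and the telescoping sums. The only points to make sure you state carefully in a write-up are the ones you already flagged: that $\la-x_1,\la-y_1\in I$ for all $\la\in A$ by the choice of $\Om$ and $A$, and the routine passage from \eqref{H2alt} to \eqref{H2} (which for $\de/\abs{x-y}>\de_0$ requires absorbing the total measure $\abs{A}$ into the constant $c$, exactly as the paper does in its non-intersecting case).
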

 
\begin{proof}
Fix a choice of $\mathbf{a}=(a_1,a_2),\mathbf{b}=(b_1,b_2)\in \Om$ with $\mathbf{a} \neq \mathbf{b}$. The proof comes in two parts: the translated graphs $(\mathbf{a} + \Gamma)$ and $(\mathbf{b} + \Gamma)$ will either intersect at a point, or they will be disjoint. We first handle the intersecting case when
\begin{equation}\label{007}
  (\mathbf{a}+\Gamma)\cap(\mathbf{b}+\Gamma)\ne \emptyset .
\end{equation}
That is, suppose there exist $s_0,t_0\in I$ and $\mathbf{a}=(a_1,a_2)\in \R^2$ such that
$$
  \mathbf{x}:=(a_1,a_2)+(s_0, \gamma(s_0)) = (b_1,b_2)+ (t_0, \gamma(t_0)).
$$

\begin{figure}[h]
  \centering
  \includegraphics[width=12.5cm]{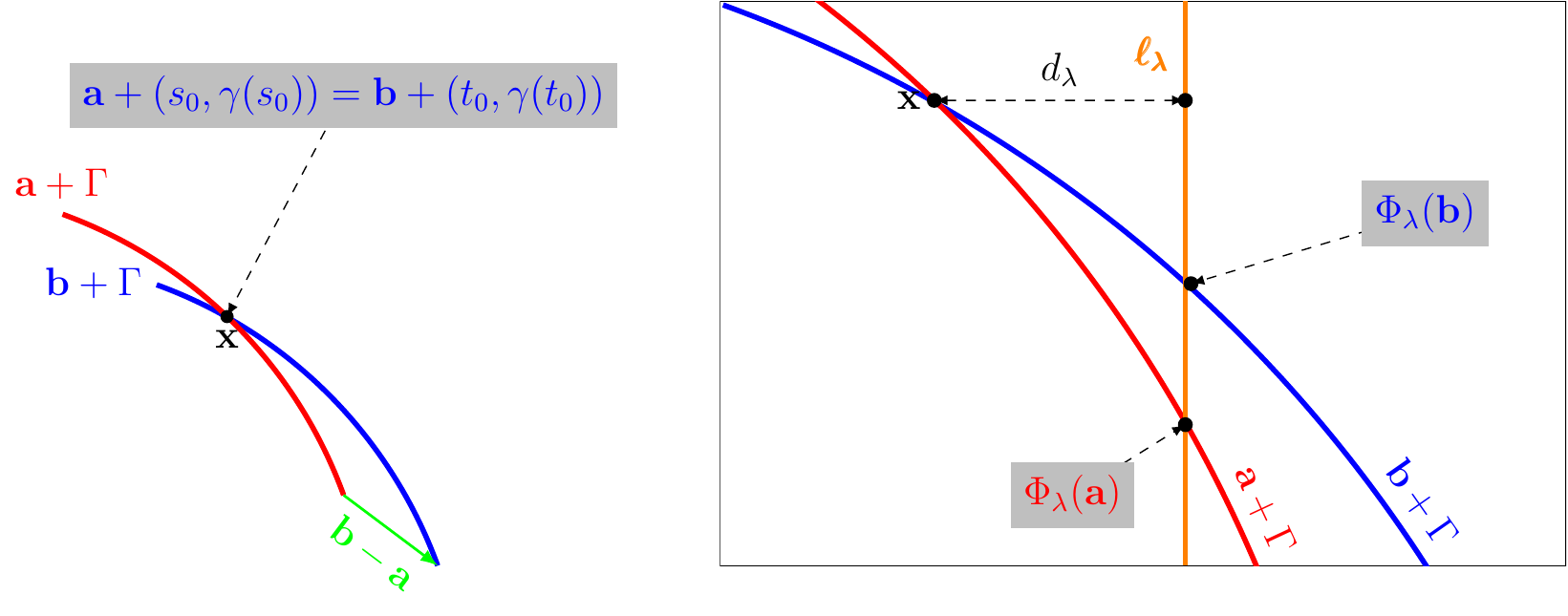}
  \caption{}\label{figure_2_c_a-vertical}
\end{figure}
Comparing coordinates, we have
 \begin{equation}\label{eq1}
 x_1= a_1  + s_0 = b_1 + t_0
 \end{equation}
 and 
 \begin{equation}\label{eq2}
 x_2 = a_2 + \gamma(s_0) = b_2 + \gamma(t_0).
  \end{equation}

For $\la \in A$, set
\begin{equation}\label{distance_la}
d_\la : = \mathrm{dist}(\mathbf{x},\ell _\lambda)=|\lambda - x_1|,\end{equation}
as depicted in Figure \ref{figure_2_c_a-vertical}. 
We verify that
\begin{equation}\label{transv_goal}
|\Phi_\lambda(\mathbf{a})-\Phi_\lambda(\mathbf{b})|
\sim
d_\lambda \cdot |\mathbf{a}-\mathbf{b}|
\end{equation}
where the implied constant is independent of $\lambda$, $\mathbf{a}$, and $\mathbf{b}$. Strictly speaking, we only need that the left hand side dominates the right hand side. Upon establishing equation \eqref{transv_goal}, it will follows that if $\de>0$ and $\lambda\in A$ satisfy
$|\Phi_\lambda(\mathbf{a})-\Phi_\lambda(\mathbf{b})|\le \de$, then
$$d_\lambda \cdot |\mathbf{a}-\mathbf{b}|\lesssim \de,$$
and so
\begin{equation}\label{1-trans}
|\{\la\in A: |\Phi_\lambda(\mathbf{a})-\Phi_\lambda(\mathbf{b})|\le \de\}|
\lesssim \frac{\de}{|\mathbf{a}-\mathbf{b}|}
\end{equation}
which is the desired transversality condition.

%the result of $1$-transversality will be established when $\mathbf{a}$ and $\mathbf{b}$ are as in \eqref{007}.

We have two further reductions. First, as depicted in Figure \ref{figure_2_c_a-vertical}, we consider the case when $\la \geq x_1$ so that
\begin{equation}\label{distance_la_updated}
d_\la =\la - x_1 \geq 0.
\end{equation}
Note that the case when when $\la - x_1 < 0$ can be handled by reflecting $E$ and $\Ga$ about the $y$-axis. Secondly, by relabeling $\mathbf{a}$ and $\mathbf{b}$ if necessary, we may assume that when $\la> x_1$, we have
\begin{equation}\label{positivity_cond_phi} 
\Phi_{\lambda}(\mathbf{b}) - \Phi_{\lambda}(\mathbf{a})>0
 \end{equation}
as in Figure \ref{figure_2_c_a-vertical}. Finally, we will also have
\begin{equation}\label{positivity_cond_space}
(b_1-a_1)>0.
\end{equation}
This follows from the geometry of the curves: in order for \eqref{positivity_cond_phi} to hold in the intersecting case, the convexity of $\Gamma$ shows that $\mathbf{b}$ must lie below and to the right of $\mathbf{a}$. 

Using the convexity condition \eqref{convex_cond}, we will show that 
\begin{equation}\label{transv_goal_mini}
 \Phi_{\lambda}(\mathbf{b}) - \Phi_{\lambda}(\mathbf{a}) \sim (b_1-a_1)\cdot d_\la.
 \end{equation} Observe that by the bound on $\gamma'$ and the relationships established in \eqref{eq1} and \eqref{eq2},
\begin{equation}%\label{coordinate_cond}
|b_2-a_2| = |\ga(s_0) - \ga(t_0)| \le |s_0-t_0| = |b_1-a_1|.
\end{equation}
As such, proving \eqref{transv_goal_mini} will be sufficient to establish \eqref{transv_goal}. We now carry out the verification of \eqref{transv_goal_mini} in three cases based on the relative sizes of $d_{\lambda}$ and $|b_1 - a_1|$. We will handle the non-intersecting case (where \eqref{007} does not hold) separately.

\smallskip

\noindent
\textbf{Case 1:} $(b_1 - a_1) < \frac{d_{\lambda}}{2}$. 
We begin by examining the simplest case, which motivates the finer analysis to come. This is depicted in the following figure:
\begin{figure}[ht]
  \centering
  \includegraphics[width=12cm]{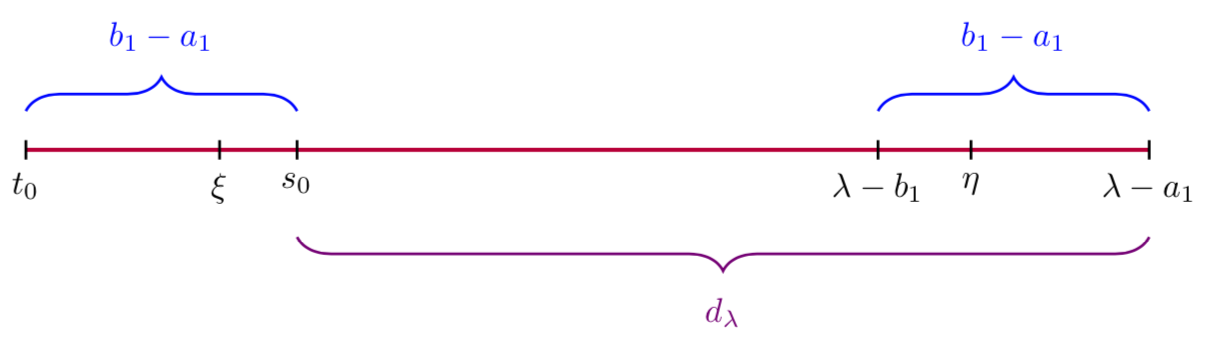}
  \caption{Case 1}
  \label{figure_case1}
\end{figure}

Using the relationships established in \eqref{remind_phi} -- \eqref{eq2} and the mean value theorem, we have
\begin{align*}
\Phi_{\lambda}(\mathbf{b}) - \Phi_{\lambda}(\mathbf{a}) 
&= \left( b_2 +  \gamma(\lambda-b_1)\right)  -    \left( a_2 +  \gamma(\lambda-a_1) \right)  \\
&=     \left(  b_2 -a_2\right)     + \left( \gamma(\lambda-b_1) - \gamma(\lambda-a_1)\right)   \\
&=      \left(  \gamma(s_0) - \gamma(t_0)\right)  + \left(  \gamma(\lambda-b_1) - \gamma(\lambda-a_1)\right)  \\
&=   \gamma'(\xi)(b_1 - a_1) -   \gamma'(\eta)(b_1 - a_1) \\
&= [\gamma'(\xi) - \gamma'(\eta)](b_1 - a_1),
\end{align*}
for some $\eta \in (\lambda- b_1,\lambda - a_1)$ and $\xi\in (t_0, s_0)$.

It follows by \eqref{Lips_cond} and \eqref{convex_cond} that 
$$\Phi_{\lambda}(\mathbf{b}) - \Phi_{\lambda}(\mathbf{a})  \sim 
 (\eta - \xi) \cdot (b_1- a_1).$$
Since $(b_1-a_1)< d_\lambda/2$, we see that  \eqref{transv_goal_mini} is verified following the observation that
$$(\eta - \xi ) \sim d_\lambda.$$
To see this, recall from \eqref{distance_la_updated} and \eqref{eq1} that 
$d_\la = \la- a_1 - s_0 = \la - b_1 -t_0$. Following Figure \ref{figure_case1},
$$\eta - \xi > (\lambda - b_1) -s_0 = (\lambda- b_1 - t_0) - (s_0 - t_0) = d_\lambda - (b_1 - a_1),$$
and similarly  
$$\eta - \xi < (\lambda-a_1) -t_0 = (\lambda-a_1-s_0) + (s_0 -t_0) = d_\lambda + (b_1 - a_1).$$

%%%VERIFY  \eqref{transv_goal}: The FULL INTERSECTION CASE
Before moving to the general argument, we observe that the separation of $d_\la$ and $(b_1-a_1)$ was crucial in guaranteeing that the variables arising from the application of the mean value theorem, $\xi$ and $\eta$, were properly separated.  More generally, a finer analysis using telescoping sums is used to guarantee such separation. 
\smallskip

%%%%%%%%%%%%%%%%%%%%%%%%%%%%%%%%%%%%%%%%%%%%%%%%%%
%CASE 2
%%%%%%%%%%%%%%%%%%%%%%%%%%%%%%%%%%%%%%%%%%%%%%%%%%
\noindent
\textbf{Case 2:} $\frac{d_{\lambda}}{2} \le (b_1 - a_1 )< d_{\lambda}$. 
Set 
\begin{equation}\label{p_k}
p = \frac{b_1- a_1}{2} , \,\ \text{ and } \,\, 
q= s_0.
\end{equation}

First, 
we take a moment to compare the variables under examination. 
Note $p>0$ by \eqref{positivity_cond_space}. 
Using \eqref{distance_la_updated} and \eqref{eq1}, we can write $d_\la = \la - b_1 - t_0$ and $b_1- a_1 = s_0 - t_0$. Therefore, when $b_1 - a_1 < d_{\lambda}$, then $s_0-t_0 < \la -b_1 - t_0$ and so
$s_0<\la - b_1.$  
This implies that
$$
t_0 < s_0 < \lambda - b_1 < \lambda -a_1,
$$
and so for $p$ and $q$ as in \eqref{p_k}, 
$$
t_0 = q-2p < q-p< q= s_0 < \lambda - b_1 = \la - a_1-2p < \la - a_1 - p < \lambda -a_1. 
$$
Appealing to \eqref{remind_phi} and \eqref{eq2}, we can write 
\begin{align*}
\Phi_{\lambda}(\mathbf{a}) - \Phi_{\lambda}(\mathbf{b}) 
&=\gamma(\lambda-a_1) -\gamma(\lambda-b_1)  -    \left( b_2 -a_2  \right)  \\
&=   \gamma(\lambda-a_1) - \gamma(\lambda-b_1)  -  \left( \gamma(s_0) - \gamma(t_0) \right)  \\
%&= \left[ \ga(\la - a_1) -\ga(\la - a_1 - p) + \ga(\la - a_1 - p) - \ga(\la - a_1 - 2p) \right]\\
%&- \left[ \ga(q) -\ga(q-p) + \ga(q - p) - \ga(q - 2p) \right]  \\
&= \sum_{j=0}^1 \left( \ga(\la - a_1-jp) -\ga(\la - a_1 - (j+1)p) \right) \\
&-  \sum_{j=0}^1 \left( \ga(q- jp) -\ga(q-(j+1)p) \right).  
\end{align*}
Applying the mean value theorem, there exists $h_0, h_1, h_0', h_1' \in (0,1)$ so that 
\begin{align*}
\Phi_{\lambda}(\mathbf{a}) - \Phi_{\lambda}(\mathbf{b}) 
%&= \left[ \ga'(\la - a_1-h_1p)\cdot p  + \ga'(\la - a_1 - p-h_2p)\cdot p \right]\\
%&- \left[ \ga'(q-h_3p)\cdot p  + \ga'(q - p-h_4p)\cdot p \right]  \\
&= \sum_{j=0}^1 \left( \ga'(\la - a_1-jp-h_jp)\cdot p  \right) \\
&-  \sum_{j=0}^1 \left( \ga'(q- jp-h_j' p ) \cdot p  \right),
\end{align*}
and it follows that 
\begin{equation}\label{main_case2}
\Phi_{\lambda}(\mathbf{a}) - \Phi_{\lambda}(\mathbf{b}) 
\sim \left( \sum_{j=0}^1 \left( \ga'(\la - a_1-jp-h_jp)
-  \ga'(q- jp-h_j' p ) \right) \right)  \cdot p.
\end{equation}

The purpose for adding and subtracting terms, is that the terms 
$\la - a_1-jp-h_jp$ and $q- jp-h_j' p$ are now appropriately separated for $j=0,1$.
Indeed, when $d_\lambda  >(b_1 - a_1)$,  recalling that $q=s_0$, 
it holds that 
$$(\la - a_1-jp-h_jp)-(s_0- jp-h_j' p)
=d_\la -h_jp + h_j'p
\geq d_\la/2, $$
and 
$$(\la - a_1-jp-h_jp)-(s_0- jp-h_j' p)
=d_\la -h_jp + h_j'p
\le 3d_\la/2. $$

The key point is that in \eqref{main_case2}, the arguments of $\gamma'$ within each summand are separated by a positive quantity comparable to $d_{\lambda}$. Using the bi-Lipschitz condition on $\gamma'$ (in which case $\gamma'$ is strictly monotonic on $I$), we conclude that
$$\Phi_{\lambda}(\mathbf{b}) - \Phi_{\lambda}(\mathbf{a}) \sim  d_\lambda \cdot p.$$ 
Since $p \sim (b_1 - a_1)$, this case is completed.

\smallskip
%%%%%%%%%%%%%%%%%%%%%%%%%%%%%%%%%%%%%%%%%%%%%%%%%%
%CASE 3
%%%%%%%%%%%%%%%%%%%%%%%%%%%%%%%%%%%%%%%%%%%%%%%%%%
\noindent
\textbf{Case 3:} $d_{\lambda} \le (b_1 - a_1)$. 
Set
\begin{equation}\label{pq}
p = \frac{ d_\lambda}{2} , \,\ \text{ and } \,\, 
q= (\la - b_1).
\end{equation}

With this choice of $p$ and $q$, the proof proceeds as in the  previous case. This situation is depicted below in Figure \ref{figure_case3}. 

\begin{figure}[h]
  \centering
  \includegraphics[width=12.5cm]{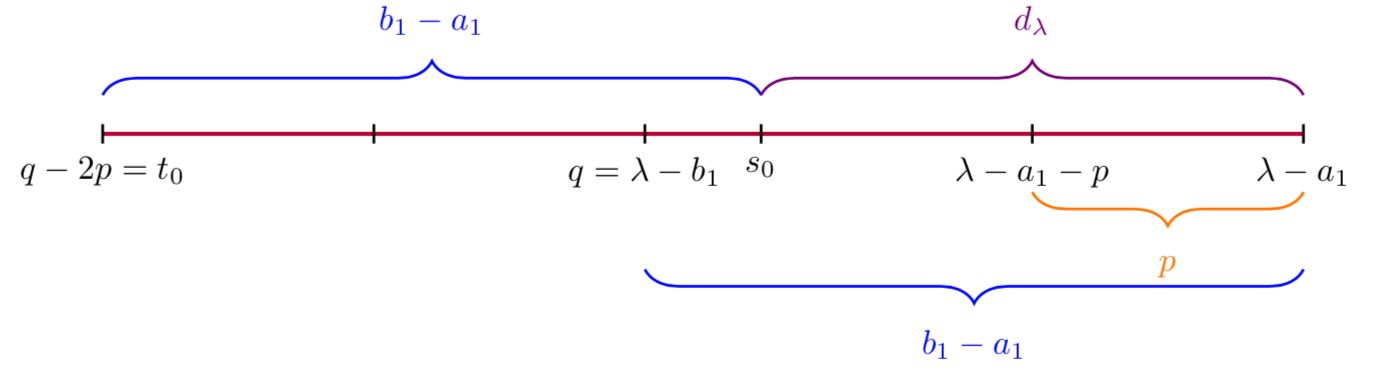}
  \caption{Case 3}
  \label{figure_case3}
\end{figure}

Using \eqref{eq1} and \eqref{distance_la_updated}, we can write  $d_\la = \la - b_1 - t_0 = \la - a_1 - s_0\geq 0$ and $b_1- a_1 = s_0 - t_0 > 0$. Therefore, when $d_\lambda \le b_1 - a_1$, then 
$ \la - b_1 - t_0 \le s_0-t_0$ and so
$\la - b_1 \le s_0$. Combining these observations, if $d_\la \le (b_1-a_1) $, then 
$$ t_0 \le  \lambda - b_1 \le s_0 \le   \lambda -a_1,
$$
and so, for $p$ and $q$ as in \eqref{pq}, 
$$t_0= q-2p \le  q-p \le q= \lambda - b_1 \le s_0 = \la - a_1 - 2p \le \la - a_1 - p \le  \lambda -a_1. 
$$

Using an identical telescoping argument as that used in the previous case to obtain \eqref{main_case2}, except now with $p$ and $q$ as in \eqref{pq}, we conclude that there exists $h_0, h_1, h_0', h_1' \in (0,1)$ so that
\begin{equation}\label{common2}
\Phi_{\lambda}(\mathbf{a}) - \Phi_{\lambda}(\mathbf{b}) 
\sim \left( \sum_{j=0}^1 \left( \ga'(\la - a_1-jp-h_jp)
-  \ga'(q- jp-h_j' p ) \right) \right)\cdot p.
\end{equation}

We now observe that 
$\la - a_1-jp-h_jp$ and $q- jp-h_j' p$ are sufficiently separated for $j=0,1$
when $d_\lambda \le b_1 - a_1$:
$$(\la - a_1-jp-h_jp)-(q- jp-h_j' p)
= b_1-a_1 -h_jp + h_j'p
\geq  (b_1-a_1)/2, $$
and 
$$(\la - a_1-jp-h_jp)-(q- jp-h_j' p)
= (b_1-a_1) -h_jp + h_j'p
\le 3(b_1-a_1)/2. $$

As in Case 2 above, we have now established the necessary separation between the arguments of $\gamma'$ in each summand; it follows that
$$\Phi_{\lambda}(\mathbf{b}) - \Phi_{\lambda}(\mathbf{b})\sim (b_1 -a_1)\cdot p.$$
Since $p \sim d_{\lambda}$, this case is finished.

%%%%%%%%%%%%%%%%%%%%%%%%%%%%%%%%%%%%%%%%%%%%%%%%%%%%%
%CURVES DON'T INTERSECT CASE
%%%%%%%%%%%%%%%%%%%%%%%%%%%%%%%%%%%%%%%%%%%%%%%%%%%%%
%
%
\smallskip

\noindent \textbf{Non-intersection case:} It remains to verify \eqref{H2} when \eqref{007} does not hold. 
Assume that $\mathbf{a}$ and $\mathbf{b}$ are such that 
\begin{equation}\label{emptycase}
  (\mathbf{a}+\Gamma)\cap(\mathbf{b}+\Gamma)= \emptyset.
\end{equation}
Let $\de>0$.  For each $\la\in A$, set
\begin{align*}
h(\la) 
&:= \Phi_{\lambda}(\mathbf{b}) - \Phi_{\lambda}(\mathbf{a}) 
=\ga(\la-b_1) - \ga(\la-a_1) + (b_2-a_2). 
\end{align*}

Relabeling if necessary, we may assume that the graph $(\mathbf{b}+\Gamma)$ is above $(\mathbf{a}+\Gamma)$ in the sense that 
for each $\la\in A$, it holds that
$$
h(\la) >0.
$$

Observe that in the case that $a_1= b_1$, then 
$h(\la) = b_2-a_2$ is constant, and so the left-hand-side of \eqref{H2} is non-zero identically when $|a-b| = |a_2 - b_2| \le \de$, in which case the right-hand-side of \eqref{H2} is bounded below by the constant $c$, and  the inequality is satisfied provided that $c$ is chosen so that $c\geq |A|$.  

Assume then that $a_1 \neq b_1$.  
We will apply a vertical shift to the curve $(\mathbf{b}+ \Gamma)$ to reduce to the intersection case considered in \eqref{007} and handled above. 
It is a consequence of the curvature assumption of Definition \ref{defn:curvature_simple} that there exists a unique $\widehat{\la}\in A$ where $h(\la)$ is minimized.  Set 
$$d:= h(\widehat{\la}).$$ (Indeed, when $a_1\neq b_1$, note that $h$ is strictly monotonic as $h'\neq 0$ by \eqref{Lips_cond}). 
Now
$$\left(  \Ga + (b_1, b_2-d) \right) \cap \left( \Ga + \mathbf{a}\right) \neq \emptyset,$$
and we see that 
\begin{equation}\label{shift}
\Phi_{\la}(\mathbf{b}) =b_2    + \gamma(\lambda -b_1)    = b_2 - d   + \gamma(\lambda -b_1)  + d 
=  \Phi_{\la}((b_1, b_2 - d )) + d.
\end{equation}

Set $ \mathbf{b}(d)=    (b_1, b_2 - d )$. 
Now, if $\la$ is such that $h(\la) =  \Phi_{\lambda}(\mathbf{b}) - \Phi_{\lambda}(\mathbf{a}) \le \de$, then 
$ \Phi_{\la}(   \mathbf{b}(d)  ) - \Phi_{\lambda}(\mathbf{a}) \le \de-d \le \de$.
Note we may assume that $\de\geq d$ since $h(\la) \geq d$ for each $\la\in A$.  
Therefore
\begin{equation}\label{containment}
\{ \la\in A:  \Phi_{\lambda}(\mathbf{b}) - \Phi_{\lambda}(\mathbf{a}) \le \de\}
\subset
\{ \la\in A:  \Phi_{\lambda}((b_1, b_2 - d )) - \Phi_{\lambda}(\mathbf{a}) \le \de\},
\end{equation}
and it follows from the the previous Cases 1-3 that there exists a constant $c>0$ that depends only on the constant $\La$ in \eqref{Lips_cond} so that 
\begin{equation}\label{bd}
|\{ \la\in A:  \Phi_{\lambda}((b_1, b_2 - d )) - \Phi_{\lambda}(\mathbf{a}) \le \de\}| \le \frac{c\,\de}{|\mathbf{b}(d)-\mathbf{a}|}.
\end{equation}

Combining \eqref{containment} and \eqref{bd}, we see that if $|\mathbf{b}(d)-\mathbf{a}|$ were bounded below by $|\mathbf{b}-\mathbf{a}|$, then the argument would be complete. Since this may not always be the case, we need a slightly more delicate analysis.  

We will now proceed in two cases, based on the relative sizes of $|b_1 - a_1|$ and $|b_2 - a_2|$. When the first difference is dominant, the shift between $\textbf{b}$ and $\textbf{a}$ is mostly horizontal and this horizontal translation is detected by the first coordinate of $\mathbf{b}(d)$. The more challenging case is when the translation is nearly vertical; this will follow the same lines as when $b_1 = a_1$. To be precise, we now consider the cases when 
$ |b_1-a_1| \geq \frac{1}{2} |b_2-a_2|$ and 
$ |b_1-a_1| < \frac{1}{2} |b_2-a_2|$ separately. 

In the former case, 
$$|b_1- a_1| \gtrsim |b-a|$$
and so 
\begin{align*}
|\mathbf{b}(d) - \mathbf{a}|^2
&= |b_1-a_1|^2  + |b_2-d-a_2|^2\\
&\geq  |b_1-a_1|^2\\
&\gtrsim |b-a|^2.
\end{align*}
In this case, we see that if $|\mathbf{b}(d)-\mathbf{a}|$ is bounded below by a constant multiple of $|\mathbf{b}-\mathbf{a}|$, and the argument is complete upon combining \eqref{containment} and \eqref{bd}.

Now consider the latter case that $ |b_1-a_1| < \frac{1}{2} |b_2-a_2|.$ Suppose that $\la$ is such that $h(\la) \le \de$.  
By the mean value theorem, there exists an $\eta$ so that 
$$ \ga(\la-b_1) - \ga(\la-a_1)=  -\ga'(\eta) (b_1-a_1).$$
Recall from \eqref{deriv_cond} that $\sup_{t\in I} |\ga'(t)| \le 1$.  
It follows from the reverse triangle inequality that
\begin{align*}
h(\la) &\geq |b_2-a_2| - |\ga'(\eta) (b_1-a_1)|\\
&\geq |b_2-a_2| - | b_1-a_1|\\
&\geq |b_2-a_2| -\frac{1}{2} |b_2-a_2|\\
&= \frac{1}{2} |b_2-a_2|\\
&\sim  |\mathbf{b}  -\mathbf{a} |,
\end{align*}
where the implicit constants are independent of $\mathbf{b}$, $\mathbf{a}$ and $\la$.  
It follows that there exists a $c'>0$ so that if  $\la$ is such that $h(\la) \le \de$, then 
$|\mathbf{b}  -\mathbf{a} | \le c'\de$ or $1 \le \frac{ c'\de  }{   |\mathbf{b}  -\mathbf{a} |}.$ Now, 
$$|\{\la\in A: h(\la) \le \de\} | \le |A| \le c \le c \frac{c' \, \de}{ |\mathbf{b}  -\mathbf{a} |},$$ provided $c$ is chosen so that $c\geq |A|$.   
\end{proof}

%%%%%%%%%%%%%%%%%%%%%%%%%%%%%%%

\subsection{Surface projections}\label{section_surfaces_trans}
%SURFACE REDUCTIONS
 Here, we show that the maps corresponding to the Favard surface length and introduced in Section \ref{subsection_Fav_higher} satisfy the transversality condition of \eqref{H2}.
We will consider the case when $\Gamma$ is a surface of revolution generated by an even, $C^2$, concave up function $f$ defined on a neighborhood of the origin. That is, $\Gamma$ will be the graph of $\gamma : \mathbb{R}^2 \to \mathbb{R}$ given by
$$\gamma(s) = f(|s|)$$
defined on a closed ball $B:= \overline{B(0, L)}$ for some $L > 0$. 

Note that $f''>0$ on $[-L, L]$.  
It is straightforward to check that $f'(x) \geq 0$ on $[0, L]$ with equality only at $x=0$; computing the second partial derivative of $\ga$ in $x$ at $|(x,y)|=0$ and $|(x,y)|\neq 0$ separately shows that there exists $c>0$ so that for each $(x,y) \in B$ 
\begin{equation}\label{deriv_nonzero}
\frac{\partial^2\gamma}{\partial x^2}(x,y) >c.
    \end{equation}

Now, we choose a parameter set $A$ and a domain $\Om$ as in Definition \ref{assumption:big}: 
set  
$A =  \overline{B(0,\frac{L}{3})} \subset \R^2$ and $\Om= \overline{B(0,\frac{L}{3}) }\subset \R^3$. 
For $\al \in A$, 
denote the vertical line
 $$\ell _\al:=\left\{(x,y, z):(x,y)=\al\right\}.$$ 
If $\al= (\al_1, \al_2) \in A$ and $\mathbf{a}=(a_1, a_2, a_3)\in \Om$, note $(\alpha_1 - a_1, \alpha_2 - a_2) \in B$ and
 $$\ell _\al\cap(\mathbf{a}+\Gamma) = (\al_1, \, \al_2, \, a_3 +  \ga(\al_1 - a_1, \al_2 - a_2))$$ is a singleton. 
  Thus, we can 
define the two-parameter family of mappings $\{\Phi_\al(\mathbf{ \cdot })\}_{\al\in A}$,
$\Phi_\al:\Om\to \R$
by
 \begin{equation}\label{remind_phi_3d}
\Phi_\al(\mathbf{a}) = a_3 + \ga(\al_1 - a_1, \al_2 - a_2).
 \end{equation}

The following lemma states that this family of maps satisfy the transversality condition of \eqref{H2} when $\Ga$ is a surface of revolution of this form. 

\begin{lem}\label{lem_surface_maps_trans}(Surface maps are transversal). 
Let $\Ga= \{(t, \ga(t)): t\in B\}= \{(t, f(|t|)): t\in B\}$ be a surface of revolution with $f:\R\rightarrow \R$,  $\ga:\R^2 \rightarrow \R$ as defined above so that \eqref{deriv_nonzero} holds on $B=\overline{B(0,L)}$. 
With the notation above, the associated family of projections $\{\Phi_\al: \Om \rightarrow \R: \al \in A\}$ is $1$-transversal in the sense of Definition \ref{definition:s-transversal}. 
\end{lem}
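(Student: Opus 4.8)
The plan is to reduce $1$-transversality, i.e. the estimate \eqref{H2} with $m=1$, to a single sublevel-set estimate on the parameter disk $A$. Writing $\tilde a = (a_1,a_2)$, $\tilde b = (b_1,b_2)$ and recalling \eqref{remind_phi_3d}, I set
$$G(\al) := \Phi_\al(\mathbf{a}) - \Phi_\al(\mathbf{b}) = (a_3 - b_3) + \ga(\al - \tilde a) - \ga(\al - \tilde b), \qquad \al \in A,$$
so the goal becomes $|\{\al \in A : |G(\al)| \le \de\}| \lesssim \de/|\mathbf{a} - \mathbf{b}|$, where $|\cdot|$ is $2$-dimensional Lebesgue measure (which is $\psi$, up to normalization). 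The decisive structural feature I would exploit is that $\ga(s)=f(|s|)$ and the sets $A,\Om$ are all invariant under rotations of the first two coordinates. Hence, when $\tilde a \ne \tilde b$, I may rotate the $(x,y)$-plane so that $\tilde b - \tilde a$ points in the positive $\al_1$-direction; this leaves $\ga$, $A$, Lebesgue measure, and the sublevel-set measure unchanged, while arranging that $\tilde a$ and $\tilde b$ share their second coordinate.

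Mirroring the non-intersection case of Lemma \ref{curve_trans}, I would then split into two regimes according to whether the horizontal or the vertical separation of $\mathbf{a}$ and $\mathbf{b}$ dominates. Let $C_0 := \sup_B |\nabla \ga| = f'(L) < \infty$. In the horizontal-dominant case $|\tilde a - \tilde b| > \tfrac{1}{2C_0}|a_3 - b_3|$ (so that $|\tilde a - \tilde b| \sim |\mathbf{a} - \mathbf{b}|$), I slice in the $\al_1$ variable. Because the two evaluation points of $\ga_x$ appearing in $\partial_{\al_1} G$ now share their second coordinate, the fundamental theorem of calculus gives
$$\partial_{\al_1} G(\al) = \ga_x(\al - \tilde a) - \ga_x(\al - \tilde b) = \int_{\al_1 - \tilde b_1}^{\al_1 - \tilde a_1} \ga_{xx}(\tau,\, \al_2 - \tilde a_2)\, d\tau,$$
an integral over an interval of length $|\tilde a - \tilde b|$ whose integrand exceeds $c$ by \eqref{deriv_nonzero} (the relevant points stay in $B$, since $A$ and the segment $[\tilde a,\tilde b]$ lie in $\overline{B(0,L/3)}$, so their difference lies in $\overline{B(0,2L/3)} \subset B$). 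Thus $\partial_{\al_1} G \ge c\,|\tilde a - \tilde b|$, so on each horizontal slice $G$ is strictly monotone and $\{|G| \le \de\}$ is an interval of $\al_1$-length at most $2\de/(c|\tilde a - \tilde b|)$. Integrating over $\al_2$ (which ranges over an interval of length at most $\tfrac{2L}{3}$) via Fubini yields $|\{|G| \le \de\}| \lesssim \de/|\tilde a - \tilde b| \lesssim \de/|\mathbf{a} - \mathbf{b}|$.

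In the vertical-dominant case $|\tilde a - \tilde b| \le \tfrac{1}{2C_0}|a_3 - b_3|$ (which also absorbs the degenerate case $\tilde a = \tilde b$, where $G$ is constant), the reverse triangle inequality together with $|\ga(\al - \tilde a) - \ga(\al - \tilde b)| \le C_0 |\tilde a - \tilde b|$ gives $|G(\al)| \ge |a_3 - b_3| - C_0|\tilde a - \tilde b| \ge \tfrac12 |a_3 - b_3| \gtrsim |\mathbf{a} - \mathbf{b}|$ for every $\al$. Consequently the sublevel set is empty once $\de$ is a small enough multiple of $|\mathbf{a} - \mathbf{b}|$, and otherwise $\de \gtrsim |\mathbf{a} - \mathbf{b}|$ makes the trivial bound $|\{|G| \le \de\}| \le |A|$ already of the required form; this is exactly the mechanism used at the end of the proof of Lemma \ref{curve_trans}. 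Taking the larger of the two constants so produced establishes \eqref{H2} with $m = s = 1$.

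I expect the main subtlety to be the passage from the three-dimensional separation $|\mathbf{a} - \mathbf{b}|$ demanded by transversality to the purely two-dimensional gradient information that the maps $\Phi_\al$ supply, since the vertical coordinate $a_3 - b_3$ enters $G$ only as an additive constant and is invisible to $\nabla_\al G$. The case split above is precisely what resolves this. Moreover, the rotational symmetry of the surface of revolution is what allows the single scalar hypothesis \eqref{deriv_nonzero} on $\ga_{xx}$ to control the relevant derivative in the otherwise arbitrary direction $\tilde b - \tilde a$; without that symmetry one would instead need a uniform lower bound on the full Hessian of $\ga$, which indeed holds here (its eigenvalues are $f''(r)$ and $f'(r)/r$, both bounded below by $\min_{[-L,L]} f'' > 0$) but is not needed once we rotate.
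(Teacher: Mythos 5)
Your proof is correct, and it shares the paper's overall skeleton --- exploit the rotational symmetry of $\gamma(s)=f(|s|)$ to align $\tilde b-\tilde a$ with the $\alpha_1$-axis, then slice in $\alpha_2$ and integrate the one-dimensional estimate via Fubini --- but it handles the slice estimate by a genuinely different and more self-contained route. The paper observes that, after the rotation, the slice $\Gamma\cap\{y=\lambda_2\}$ is a planar curve satisfying the simple curvature condition of Definition \ref{defn:curvature_simple} and that $(\Gamma+\mathbf{b})\cap\{y=\lambda_2\}$ is a translate of that same curve, and then simply invokes Lemma \ref{curve_trans} (specifically \eqref{1-trans}) to dispatch each slice; all the case analysis is thus outsourced to the planar lemma, which was built to work under the weaker bi-Lipschitz hypothesis on $\gamma'$. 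You instead prove the slice bound directly: the fundamental theorem of calculus converts $\partial_{\alpha_1}G$ into an integral of $\gamma_{xx}$ over an interval of length $|\tilde a-\tilde b|$, and \eqref{deriv_nonzero} gives the uniform lower bound $\partial_{\alpha_1}G\ge c\,|\tilde a-\tilde b|$, so each horizontal sublevel set is an interval of length $\lesssim \delta/|\tilde a-\tilde b|$. Your explicit split into horizontal-dominant and vertical-dominant regimes replaces the paper's implicit reliance on the non-intersection case of Lemma \ref{curve_trans} to handle pairs separated mostly in the $z$-direction, and correctly addresses the point you flag yourself: the vertical separation $a_3-b_3$ is invisible to $\nabla_\alpha G$ and must be controlled by the reverse triangle inequality instead. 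What your approach buys is a short, self-contained argument exploiting the full strength of the $C^2$ hypothesis; what the paper's buys is reuse of machinery already established and a cleaner logical dependence of the surface case on the curve case. Both are valid, and your containment check ($\overline{B(0,2L/3)}\subset B$) and the comparability $|\tilde a-\tilde b|\sim|\mathbf{a}-\mathbf{b}|$ in the horizontal-dominant regime are exactly the details that need verifying.
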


 While the proof of Lemma \ref{lem_surface_maps_trans} is similar to its $2$-dimensional analogue, Lemma \ref{curve_trans}, there is a new layer of complexity that arises.  
  In the $2$-d case, in which $\Ga$ was a curve and the graph of a real valued function, the intersection set $\left( \mathbf{a} + \Ga \right) \cap \left( \mathbf{b} + \Ga \right)$ consisted of at most one point.  
Denoting this point by  $\mathbf{x}=(x_1,x_2)$ (when it exists) and setting 
 $H(\la):= |\Phi_\la(\mathbf{a}) - \Phi_\la(\mathbf{b}) |$, 
 with $\Phi_\al$ as in \eqref{remind_phi},
we saw that
 $H(x_1)=0$ and observed that $H$ grows at a linear rate in a neighborhood of $x_1$. 
 In the $3$-d case, in which $\Ga$ is a surface, the set $\left( \mathbf{a} + \Ga \right) \cap \left( \mathbf{b} + \Ga \right)$ may consists of many points. Here, we show that the the function $H(\la)$, now with $\Phi_\al$ as in \eqref{remind_phi_3d}, obeys a similar linear growth condition along horizontal lines.  We now prove Lemma \ref{lem_surface_maps_trans} using the set-up above, 
and we begin with a few simplifying reductions. 
 
\begin{proof}
By rescaling in the $z$-axis, we may assume that all the first partial derivatives of $\gamma$ are bounded by $1$. For distinct $\mathbf{a}, \mathbf{b}\in \Om$, our aim is to verify that 
$$|\{\la \in A :| \Phi_\la(\mathbf{a}) - \Phi_\la(\mathbf{b}) |\le \de \}| \lesssim \frac{\de}{|\mathbf{a}-\mathbf{b}|}.$$
Translating, it is enough to consider the situation when $\mathbf{a} = (0,0,0)$. Further, since $\Ga$ is symmetric about the origin, it suffices to consider the case when $\mathbf{b} = (b_1, 0, b_3)$ for $b_1, b_3\geq 0$. 

After this reduction, our goal is to show that
\begin{equation}\label{mama3d}
  |\{  \lambda \in A: |\Phi_\lambda(\vec{0}) - \Phi_\lambda(\mathbf{b}) | \le \de \}|  
  \lesssim \frac{\de}{|\mathbf{b}|},
  \end{equation}
  for a universal constant independent of $\mathbf{b}$ and $\de$. To this end, fix the coordinate $\lambda_2$ and form a slice parallel to the $xz$-plane; we will show that 
  \begin{equation}\label{littlemama3d}
  |\{  \lambda_1: \la= (\la_1, \la_2) \in A \text{ and }  |\Phi_\lambda(\vec{0}) - \Phi_\lambda(\mathbf{b}) | \le \de \}|  \lesssim \frac{\de}{|\mathbf{b}|}
  \end{equation}
  for a universal constant independent of $\la_2$. Once this is completed, we may integrate the estimate with respect to $\lambda_2$ over the interval $[-\frac{L}{3}, \frac{L}{3}]$ and apply Fubini's theorem to recover \eqref{mama3d}.  
  Note that $|\cdot|$ in \eqref{mama3d} denotes the $2$-dimensional Lebesgue measure and $|\cdot|$ in \eqref{littlemama3d} denotes the $1$-dimensional Lebesgue measure.
  
We are now working within a two-dimensional slice of the surface and will be able to apply the results of Section \ref{Favard curve section}. Note that the slice
$$\gamma_{\lambda_2} := \Ga\cap \{y=\la_2\} = \{ (t_1, \la_2, \ga(t_1,\la_2)): (t_1, \la_2) \in B\}$$
forms a curve in the plane $\{y = \lambda_2\}.$ Since $\mathbf{b} = (b_1, 0, b_3)$, the translated surface $(\Gamma + \mathbf{b})$ also intersects this plane in a curve
$$(\Ga + \mathbf{b}) \cap \{y = \la_2\} = \{(s_1 + b_1, \la_2, \ga(s_1, \la_2) + b_3) : (s_1, \la_2) \in B\}.$$
The key point is that this curve is merely a translate of $\gamma_{\lambda_2}$:
$$(\Gamma + \mathbf{b}) \cap \{y = \lambda_2\} = \gamma_{\la_2} + \mathbf{b}.$$

Recalling the curvature condition \eqref{deriv_nonzero}, we see that the curve $\gamma_{\la_2}$ satisfies the simple curvature condition of Definition \ref{defn:curvature_simple}. Applying Lemma \ref{curve_trans} (in particular, the result of \eqref{1-trans}) then establishes \eqref{littlemama3d} as desired.
\end{proof}

%%%%%%%%%%%%%%%%%%%%%%%%%%%%%%%%%%%%%%%%%%%%%%%%%%%%%%
%ENERGY SECTION
%%%%%%%%%%%%%%%%%%%%%%%%%%%%%%
\section{Energy techniques for pushforwards}\label{energy_section}
We now turn to measure estimates using the energy and potential based approach of Mattila \cite{Mat90}. The key idea here will be that the energies associated to a measure $\mu$ and its pushforwards $\widetilde{\pi_{\alpha}}_{\sharp} \mu$ are closely related. This will allow us to prove strong asymptotic lower bounds for the Favard curve lengths of neighborhoods of sets. First, we begin by proving Proposition \ref{thm:basic_marstrand}, illustrating how transversality plays a role in the study of pushforward measures. This proposition provides a generalization of Marstrand's result on the typical dimension of projections to a nonlinear setting, and the proof provided here is similar to that which appeared in Solomyak's \cite{Sol98} in the context of general metric spaces.

Recall that we have a family $\{\proj : \alpha \in A\}$ of maps into an $m$-dimensional space, $\dimh E = t \le m$, and the family of projections is $m$-transversal. Our goal is to show that for $\psi$-almost every $\alpha \in A$,
$$\dimh \proj E = t.$$
The primary tool will be to use that if $x$ and $y$ are two fixed points, then the projection operators $\proj$ will usually be able to distinguish between $x$ and $y$ on scale $|x - y|$. This is quantified with the distribution function.

\begin{proof}[Proof of Proposition \ref{thm:basic_marstrand}]
Suppose that $E$ supports a Borel probability measure $\mu$ with finite $\tau$-energy. 
Recall the energy of the measure $\mu$, $I_\tau(\mu)$, is defined in \eqref{energy} and the pushforward,  $\proj{}_{\sharp} \mu$,  is defined in \eqref{pushforward}. Averaging over the set of parameters and computing the energies of the pushforward measures, we have
\begin{align*}
\int_A I_{\tau}(\proj{}_{\sharp} \mu) \, d\psi(\alpha) &= \int_A \iint \frac{1}{|u - v|^{\tau}} \, d\proj{}_{\sharp} \mu(u) \, d\proj{}_{\sharp} \mu(v) \, d\psi(\alpha) \\
&= \int_A \iint \frac{1}{|\proj(x) - \proj(y)|^{\tau}} \, d\mu(x) \, d\mu(y) \,d\psi(\alpha) \\
&= \iint \int_A \frac{1}{|\proj(x) - \proj(y)|^{\tau}} \, d\psi(\alpha) \, d\mu(x) \, d\mu(y) \\
&= \iint \left[\int_A \frac{|x - y|^{\tau}}{|\proj(x) - \proj(y)|^{\tau}} \, d\psi(\alpha)\right] \frac{d\mu(x) \, d\mu(y)}{|x - y|^{\tau}}.
\end{align*}
We can study the innermost integral using the transversality condition together with the distribution function:
\begin{align*}
\int_A \frac{|x - y|^{\tau}}{|\proj(x) - \proj(y)|^{\tau}} \, d\psi(\alpha) 
&= \int_0^{\infty} \psi \left(\left\{\alpha : \frac{|x - y|^{\tau}}{|\proj(x) - \proj(y)|^{\tau}} \ge r\right\}\right) \, dr \\ 
&= \int_0^{\infty} \psi\left(\left\{\alpha : |\proj(x) - \proj(y)| \le r^{-1/{\tau}} |x - y|\right\}\right) \, dr \\
&= \tau \int_0^{\infty} \psi\left(\left\{ \alpha : |\proj(x) - \proj(y)| \le \de |x - y|\right\} \right) \, \frac{d\de}{\de^{1 + {\tau}}}.
\end{align*}
For a fixed $\de_0 > 0$, the integral on $[\de_0, \infty)$ converges: our parameter set has finite measure, and $\int_{\de_0}^{\infty} \frac{d\de}{\de^{1 + {\tau}}}$ is finite. Therefore, we only need to consider the case of $\de \in [0, \de_0)$; this corresponds to the set of parameters which are not able to distinguish $x$ and $y$, and will have small measure due to transversality. In particular, the $m$-transversality of \eqref{H2alt} with $s=m$ yields
$$\psi \left(|\left\{ \alpha : |\proj(x) - \proj(y) \le \delta |x - y|\right\} \right) \lesssim \de^m,$$
for all $\delta \le \delta_0$, implying that
\begin{align*}
\int_0^{\de_0} 
\psi \left(\left\{ \alpha : |\proj(x) - \proj(y)| \le \de |x - y|\right\} \right) \frac{d\de}{\de^{1 + {\tau}}} &\lesssim  \int_0^{\de_0} \de^{m - {\tau}} \, \frac{d\de}{\de}.
\end{align*}
This converges provided that $\tau < m$. 
We have now shown that for $\tau<m$,
$$\int_A I_{\tau}(\proj{}_{\sharp} \mu) \, d\alpha \lesssim
\iint  \frac{d\mu(x) \, d\mu(y)}{|x - y|^{\tau}} = I_{\tau}(\mu) < \infty,$$
and therefore the energy $I_\tau(\proj{}_{\sharp} \mu)$ is finite for $\psi$-almost every $\alpha$.

To finish the proof, recall that if $E$ has positive $\mathcal{H}^t$ measure, then for any $\tau < t$ there exists a measure $\mu$ supported on $E$ with finite $\tau$-energy (see Frostman's lemma in \cite{Mat15}). It follows that if  $\tau< t\le m$, then
the pushforward $\proj{}_{\sharp} \mu$ will also have finite $\tau$-energy, implying that $\proj(E)$ has Hausdorff dimension at least $\tau$. Passing to a countable sequence $\tau_n$ converging upwards to $t$ gives the desired result.
\end{proof}

For the remainder of the section, we will employ the notation of Definition \ref{assumption:big}. 
Recall that the lower derivative of the measure $\proj_{\sharp} \mu$ with respect to $h$ at the point $u$ is defined by 
$$\underline{D}(\proj_{\sharp} \mu, h, u) = \liminf_{\delta \to 0} \frac{\proj_{\sharp} \mu(B(u, \delta))}{h(B(u, \delta))}.$$
The upper derivative is similarly defined, taking the limit supremum. In the case that the lower and upper derivatives coincide, they will agree with the Radon-Nikodym derivative denoted $D(\proj_{\sharp} \mu, h, u)$. 

\begin{lem}[Absolute continuity of pushforwards]
\label{theorem:l2_integrability}
Suppose that $\{\proj : \alpha \in A\}$ is an $s$-transversal family of maps and that $\psi$ is a Borel measure on $A$. If $\mu$ is a Borel measure with compact support contained in $\Omega$ and $I_s(\mu) < \infty$, then for $\psi$-almost every $\alpha$, we have that $\proj_{\sharp} \mu \ll h$ and
$$\int_A \int_X D(\proj_{\sharp} \mu, h, u)^2 \, dh(u) d\psi(\alpha) \lesssim I_s(\mu).$$
\end{lem}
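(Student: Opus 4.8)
The plan is to bound the averaged $L^2$ norm of the lower derivative $\underline{D}(\proj_{\sharp}\mu, h, u)$ directly, then deduce absolute continuity from the finiteness of this quantity. The starting point is the classical Besicovitch-type estimate relating an $L^2$ density bound to the behavior of the measure on small balls: for a fixed scale $\delta$, one has
$$
\int_X \frac{\proj_{\sharp}\mu(B(u,\delta))}{h(B(u,\delta))}\, d(\proj_{\sharp}\mu)(u) \gtrsim \int_X \underline{D}(\proj_{\sharp}\mu, h, u)\, d(\proj_{\sharp}\mu)(u),
$$
or more usefully, via Fatou's lemma applied to $\liminf_{\delta\to 0}$, the quantity $\int_X \underline{D}^2\, dh$ is controlled by $\liminf_{\delta\to 0}$ of the averaged ratio $h(B(u,\delta))^{-1}\proj_{\sharp}\mu(B(u,\delta))$ integrated against $\proj_{\sharp}\mu$. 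So the first step is to reduce the claim to a uniform-in-$\delta$ bound on
$$
\int_A \int_X \frac{\proj_{\sharp}\mu(B(u,\delta))}{h(B(u,\delta))}\, d(\proj_{\sharp}\mu)(u)\, d\psi(\alpha),
$$
and show this is $\lesssim I_s(\mu)$ independently of $\delta$.

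The second step is the computation of that double integral by unwinding the pushforward, exactly as in the energy computation for Proposition \ref{thm:basic_marstrand}. Writing everything back on $\Omega$, the inner integral becomes
$$
\int_X \frac{\proj_{\sharp}\mu(B(u,\delta))}{h(B(u,\delta))}\, d(\proj_{\sharp}\mu)(u) = \iint_{\Omega\times\Omega} \frac{\mathbf{1}\{|\proj(x)-\proj(y)|\le \delta\}}{h(B(\proj(x),\delta))}\, d\mu(x)\, d\mu(y).
$$
Using the lower regularity $h(B(u,\delta))\gtrsim \delta^m$ from Definition \ref{assumption:big}, the denominator is bounded below by $\delta^m$ up to a constant. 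After applying Fubini to interchange the $\psi$-integral with the $\mu\times\mu$-integral, the heart of the matter is the inner $\psi$-average
$$
\int_A \mathbf{1}\{|\proj(x)-\proj(y)|\le\delta\}\, d\psi(\alpha) = \psi\{\alpha: |\proj(x)-\proj(y)|\le\delta\},
$$
which by the $s$-transversality condition \eqref{H2} is bounded by $c\,\delta^m/|x-y|^s$.

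Combining these gives that the whole averaged expression is $\lesssim \delta^{-m}\cdot\delta^m\iint |x-y|^{-s}\, d\mu(x)\, d\mu(y) = c\, I_s(\mu)$, uniformly in $\delta$. The powers of $\delta$ cancel precisely, which is the point of matching the target dimension $m$ with the exponent appearing in the transversality bound. The third step is then to pass from the uniform-in-$\delta$ bound back to the lower derivative: by Fatou's lemma the $\liminf$ in $\delta$ of the integrand is integrable with the same bound, and a standard covering/density argument (cf. \cite[Chapter 2]{Mat95}) converts the control of $\liminf_{\delta\to 0} h(B(u,\delta))^{-1}\proj_{\sharp}\mu(B(u,\delta))$ into the bound $\int_X \underline{D}(\proj_{\sharp}\mu,h,u)^2\, dh(u)\lesssim I_s(\mu)$. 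Finiteness of this integral for $\psi$-almost every $\alpha$ forces the lower derivative to be finite $h$-a.e., whence $\proj_{\sharp}\mu\ll h$ and the upper and lower derivatives coincide with the Radon-Nikodym derivative $D$.

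I expect the main obstacle to be the third step: the clean passage from the elementary ball-ratio estimate to the genuine $L^2$ bound on the Radon-Nikodym derivative requires a differentiation theorem for $h$ and some care, since $h$ is only assumed to satisfy a one-sided lower regularity bound rather than being Ahlfors regular or Lebesgue measure. One must either invoke a Besicovitch/Vitali covering argument valid for $h$ or argue that the relevant densities are well-behaved under the growth hypothesis; establishing that the $\liminf$ of averages indeed dominates $\int \underline{D}^2\, dh$ (rather than merely $\int \underline{D}\, d\proj_{\sharp}\mu$) is where the quadratic structure and the choice to integrate against $dh$ must be reconciled. The energy computation itself in steps one and two is routine once the pushforward is unwound.
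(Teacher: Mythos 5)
Your energy computation (steps one and two) is exactly the paper's: unwind the pushforward, move the $\psi$-integral inside via Fubini, apply the $s$-transversality bound $\psi\{\alpha: |\proj(x)-\proj(y)|\le\delta\}\lesssim \delta^m|x-y|^{-s}$, and let the powers of $\delta$ cancel against $h(B(u,\delta))\gtrsim\delta^m$. But the step you flag as the expected obstacle is where the argument genuinely breaks, in two ways. First, your route to absolute continuity is incorrect as stated: finiteness of $\underline{D}(\proj_{\sharp}\mu,h,\cdot)$ at $h$-almost every point does \emph{not} imply $\proj_{\sharp}\mu\ll h$ (a Dirac mass has finite lower derivative off a single $h$-null point yet is singular). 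The correct criterion, \cite[Theorem 2.12]{Mat95}, requires finiteness at $\proj_{\sharp}\mu$-almost every point. Second, by aiming directly at $\int_X\underline{D}^2\,dh$ you force yourself into a differentiation/covering argument for the measure $h$ — which is only assumed lower regular — before you even know that the Radon--Nikodym derivative $D$ exists.

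The paper sidesteps both problems with a single structural choice: it bounds the \emph{linear} quantity $\iint\underline{D}(\proj_{\sharp}\mu,h,u)\,d\proj_{\sharp}\mu(u)\,d\psi(\alpha)$, i.e.\ the lower derivative integrated against the pushforward measure itself, not against $h$ and not squared. Your own computation already controls exactly this by $I_s(\mu)$, since the quantity surviving Fatou is $\int h(B(u,\delta))^{-1}\proj_{\sharp}\mu(B(u,\delta))\,d\proj_{\sharp}\mu(u)$. Finiteness of that integral gives $\underline{D}<\infty$ at $\proj_{\sharp}\mu$-a.e.\ $u$ for $\psi$-a.e.\ $\alpha$, hence $\proj_{\sharp}\mu\ll h$ by Mattila's criterion; and once absolute continuity is established, the identity $d\proj_{\sharp}\mu=D\,dh$ converts the bounded quantity into $\int_X D^2\,dh$ by a trivial change of measure, with no covering or density theorem for $h$ required. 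So the repair is not to strengthen your third step but to change the target of the estimate.
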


\begin{proof}
Consider the integral
$$\iint \underline{D}(\proj_{\sharp} \mu, h, u) d\proj_{\sharp}\mu(u) d\psi(\alpha).$$
Due to the joint continuity assumption for the functions $(x, \alpha) \mapsto \proj(x)$, the integrands will be measurable with respect to the appropriate measures (each of which are Borel measures).  We now follow the definition of the lower derivative along with Mattila's approach:
\begin{align}
\iint \underline{D}(\proj_{\sharp} \mu, h, u) &d\proj_{\sharp}\mu(u) d\psi(\alpha) \nonumber \\
&= \iint \liminf_{\delta \to 0} \frac{\proj_{\sharp}\mu\big(B(u, \delta)\big)}{h(B(u, \delta))}\,  d\proj_{\sharp}\mu(u) d\psi(\alpha) \nonumber \\
&\lesssim \liminf_{\delta \to 0} \frac{1}{\delta^m} \iint \proj_{\sharp} \mu \big(B(u, \delta)\big) \, d\proj_{\sharp}\mu(u) d\psi(\alpha) \nonumber \\
&= \liminf_{\delta \to 0} \frac{1}{\delta^m} \iint \mu\big(\proj^{-1} B(u, \delta)\big) \, d\proj_{\sharp}\mu(u) d\psi(\alpha) \nonumber \\
&= \liminf_{\delta \to 0} \frac{1}{\delta^m} \iint \mu \big\{y : \proj y \in B(u, \delta)\big\} \, d\proj_{\sharp}\mu(u) d\psi(\alpha). \label{equation:fatou}
\end{align}
Pushforward measures obey the identity
$$\int g \, df_{\sharp}\nu = \int (g \circ f) \, d\nu$$
for non-negative Borel functions $f$ and $g$ and a Borel measure $\nu$. Applying this to the function $g(u) := \mu \left\{y : \proj y \in B(u, \delta)\right\}$, we find that
\begin{align}
\iint \mu \big\{y : \proj y \in B(u, \delta)\big\} \, &d\proj_{\sharp}\mu(u) d\psi(\alpha) \nonumber \\
&= \iint \mu \left\{y : \proj y \in B(\proj x, \delta)\right\} \, d\mu(x) d\psi(\alpha) \nonumber \\
&= \iint \psi \left\{\alpha : \proj y \in B(\proj x, \delta)\right\} \, d\mu(x) d\mu(y) \nonumber \\
&= \iint \psi\left(\left\{\alpha : \operatorname{dist}(\proj x, \proj y) \le \delta\right\}\right) \, d\mu(x) d\mu(y). \label{equation:distance}
\end{align}
Combining equations \eqref{equation:fatou} and \eqref{equation:distance}, we get
\begin{align}\label{equation:energy_precursor}
\iint \underline{D}(\proj_{\sharp} \mu, h, u) &d\proj_{\sharp}\mu(u) d\alpha \\ \nonumber
&\lesssim \liminf_{\delta \to 0} \iint \frac{\psi(\left\{\alpha : |\proj x - \proj y| \le \delta\right\})}{\delta^m} \, d\mu(x) d\mu(y).
\end{align}
We are now ready to apply the $s$-transversality condition. Since
$$\psi \left( \{\alpha : |\proj(x) - \proj(y)| \le \delta\}\right) \lesssim \frac{\delta^m}{|x - y|^s},$$
we find that
\begin{equation} \label{equation:energy}
\iint \underline{D}(\proj_{\sharp} \mu, h, u) d\proj_{\sharp}\mu(u) d\psi(\alpha) 
\lesssim \iint \frac{d\mu(x) \, d\mu(y)}{|x - y|^s}
= I_s(\mu).
\end{equation}

Since $I_s(\mu)< \infty$, we conclude that for $\psi$-almost every $\alpha$, the lower derivative $\underline{D}(\proj_{\sharp} \mu, h, u)$ is finite for $\proj_{\sharp}\mu$ almost every $u \in X$. Following \cite[Thm 2.12]{Mat95}, this implies that $\proj_{\sharp} \mu \ll h$ for all such parameters, in which case $D(\proj_{\sharp} \mu, h, u)$ exists for $\proj_{\sharp} \mu$-a.e. point $u \in X$. Finally, we can use Fubini's theorem to conclude that
\begin{equation}\label{equation:fubini}
\int_X D(\proj_{\sharp} \mu, h, u)^2 \, dh(u) = \int_{X} D(\proj_{\sharp} \mu, h, u) \, d\proj_{\sharp} \mu(u).
\end{equation}
The combination of \eqref{equation:fubini} with the estimate \eqref{equation:energy} establishes the desired result.
\end{proof}

The next result follows from Lemma \ref{theorem:l2_integrability} and, in essence, states that nonlinear variants of Favard length are controlled from below by the energy of any nice measure placed on the set.

\begin{lem}[Lower bound on average projection length]\label{theorem:general_favard}
Suppose that $\{\proj : \alpha \in A\}$ is an $s$-transversal family of maps with a Borel probability measure $\psi$ on $A$. If $\mu$ is a Borel probability measure supported on a compact set $F \subseteq \Omega$, then
$$\int_A \left(h(\proj F)\right)^{-1} d\psi(\alpha) \lesssim I_s(\mu)$$
and
\begin{equation}\label{main_lower}
\frac{1}{I_s(\mu)} \lesssim \int_A h(\proj F) \, d\psi(\alpha).
\end{equation}
\end{lem}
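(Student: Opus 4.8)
The plan is to derive both inequalities from Lemma \ref{theorem:l2_integrability} by two applications of the Cauchy--Schwarz inequality: one on the target space $X$ with respect to $h$, and one on the parameter space $A$ with respect to $\psi$. I may assume throughout that $I_s(\mu) < \infty$, since otherwise both claims are vacuous.

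First I would fix a parameter $\alpha$ belonging to the full-$\psi$-measure set on which Lemma \ref{theorem:l2_integrability} guarantees $\proj_{\sharp}\mu \ll h$, and abbreviate the Radon--Nikodym density by $D_\alpha(u) := D(\proj_{\sharp}\mu, h, u)$. Since $\mu$ is a probability measure, $F = \supp\mu$ is compact, and $\proj$ is continuous, the image $\proj F$ is compact and carries the full mass of the pushforward: $\proj_{\sharp}\mu(\proj F) = \mu(\proj^{-1}(\proj F)) \ge \mu(F) = 1$. Hence $\int_{\proj F} D_\alpha \, dh = 1$, and in particular $h(\proj F) > 0$ (absolute continuity would otherwise force the integral to vanish). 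By Cauchy--Schwarz on $(\proj F, h)$,
$$1 = \left(\int_{\proj F} D_\alpha \, dh\right)^2 \le h(\proj F) \int_X D_\alpha^2 \, dh,$$
so that $h(\proj F)^{-1} \le \int_X D_\alpha^2 \, dh$ for $\psi$-a.e. $\alpha$. Integrating this pointwise bound against $\psi$ and invoking Lemma \ref{theorem:l2_integrability} yields
$$\int_A h(\proj F)^{-1} \, d\psi(\alpha) \le \int_A \int_X D_\alpha^2 \, dh \, d\psi(\alpha) \lesssim I_s(\mu),$$
which is the first displayed inequality of the lemma.

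For the second inequality I would exploit that $\psi$ is a \emph{probability} measure, so that a second Cauchy--Schwarz — pairing $h(\proj F)^{-1/2}$ against $h(\proj F)^{1/2}$ with respect to $\psi$ — gives
$$1 = \left(\int_A 1 \, d\psi\right)^2 \le \left(\int_A h(\proj F)^{-1} \, d\psi\right)\left(\int_A h(\proj F) \, d\psi\right).$$
Combining this with the first inequality produces $1 \lesssim I_s(\mu) \int_A h(\proj F) \, d\psi(\alpha)$, which rearranges to \eqref{main_lower}.

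The arithmetic is routine; the only point demanding care is the assertion that $\proj F$ is a measurable set carrying the full mass of $\proj_{\sharp}\mu$ and has positive $h$-measure. This rests on the compactness of $F$ together with the joint continuity of $(p,\alpha) \mapsto \proj(p)$ from Definition \ref{assumption:big}, which ensure $\proj F$ is compact and that the absolutely continuous measure $\proj_{\sharp}\mu$ is concentrated there. Once that structural fact is in place, both estimates are immediate, and the sole analytic input is the energy bound supplied by Lemma \ref{theorem:l2_integrability}.
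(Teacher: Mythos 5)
Your proposal is correct and follows essentially the same route as the paper: the pushforward mass identity $\proj_{\sharp}\mu(\proj F)=1$ written as an integral of the density, Cauchy--Schwarz on $(\proj F, h)$, integration in $\psi$ together with Lemma \ref{theorem:l2_integrability}, and a second Cauchy--Schwarz pairing $h(\proj F)^{1/2}$ with $h(\proj F)^{-1/2}$ over $(A,\psi)$. Your extra care about the measurability and positivity of $h(\proj F)$ is a welcome refinement of details the paper leaves implicit, but the argument is the same.
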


This is an analogue of of \cite[Theorem 3.2]{Mat90}.  The proof relies on Lemma \ref{theorem:l2_integrability}.
\begin{proof}
 Since $F \subseteq \proj^{-1}(\proj F)$ and $\mu$ is a probability measure, we can apply the definition of the pushforward to conclude that
$$1 = \proj_{\sharp} \mu \left(\proj F\right)^2 = \left(\int_{\proj F} D(\proj_{\sharp} \mu, h, u) \, dh(u)\right)^2.$$
Invoking the Cauchy-Schwartz inequality,
\begin{align*}
1 &\le h(\proj F) \int_{\proj F} D(\proj_{\sharp} \mu, h, u)^2 \, dh(u)
\end{align*}
for all $\alpha \in A$. After dividing both sides by $h(\proj F)$, integrating in $\psi$, and invoking Lemma \ref{theorem:l2_integrability}, we have
\begin{align*}
\int_A \left(h(\proj F)\right)^{-1} \, d\psi(\alpha) &\le \int_{A} \int_{\proj F} D(\proj_{\sharp} \mu, h, u)^2 \, dh(u) \, d\psi(\alpha) \\
&\le \int_{A} \int_{X} D(\proj_{\sharp} \mu, h, u)^2 \, dh(u) \, d\psi(\alpha) \\
&\lesssim I_s(\mu),
\end{align*}
thus establishing the first inequality.

For the second part of the theorem, consider the function $f(\alpha) = h(\proj F).$ Applying the Cauchy-Schwarz inequality to $1= \int d\psi= \int f^{1/2} \cdot f^{-1/2} d\psi$ immediately gives the claimed result.
\end{proof}

In order to apply Lemma \ref{theorem:general_favard} to neighborhoods, we construct a measure with appropriate support and obtain an upper bound on the energy. The following lemma says that whenever the dimension of $F$ is known, there is at least one auxiliary measure supported on the neighborhood $F(r)$ whose energy is easily computable. This is the final tool that we will need in order to estimate the average projection size of a neighborhood, and it comes directly from  \cite[Theorem 4.1]{Mat90}. We give a summary of the main idea of the construction.
\begin{lem}[Construction of auxiliary measure]\label{lem_energy_bound}
Let $0<s\le m$.  
Suppose $\mu$ is a Borel probability measure supported on a compact set $F\subset \R^n$ and there exists $c>0$ so that 
$$\mu(B(x,r)) \le cr^s$$
for each $x\in \R^n$ and $r>0$.  
Then for each $r\in (0,1)$, there exists a probability measure $\nu$ supported in $F(2r)$ so that
\begin{equation}\label{energy_bound_1}
I_s(\nu) \lesssim r^{s-m} \,\,\, \text{ if  $s<m$}
\end{equation}
\begin{equation}\label{energy_bound_2}
I_s(\nu) \lesssim \log{\left(\frac{1}{r}\right)} \,\,\, \text{ if  $s=m$.}
\end{equation}
\end{lem}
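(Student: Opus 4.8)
The plan is to produce $\nu$ by \emph{mollifying} $\mu$ at scale $r$, which spreads the $s$-dimensional mass of $\mu$ into a genuinely $n$-dimensional object while moving the support by at most $r$. Concretely, I would set $\phi_r = \omega_n^{-1} r^{-n}\mathbf{1}_{B(0,r)}$, the normalized indicator of the $r$-ball in $\R^n$, and define $\nu = \mu * \phi_r$. Since $\phi_r$ is a probability density supported in $B(0,r)$, the convolution $\nu$ is again a Borel probability measure, and $\supp\nu \subseteq \{x : \dist(x,\supp\mu)\le r\} = F(r)\subseteq F(2r)$, as required. Its density satisfies $\tfrac{d\nu}{d\mathcal{L}^n}(x) = \omega_n^{-1}r^{-n}\mu(B(x,r))$, so the growth hypothesis on $\mu$ yields the pointwise bound $\tfrac{d\nu}{d\mathcal{L}^n}(x)\lesssim r^{s-n}$. (A discrete variant, replacing $\mu$ restricted to each cube of an $r$-grid by a uniform spread, works equally well.)

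Next I would record two scale-dependent ball estimates for $\nu$, matched at the transition scale $\rho = r$. For $\rho\ge r$ the convolution cannot move mass beyond distance $r$, so $\nu(B(x,\rho))\le \mu(B(x,\rho+r))\lesssim(\rho+r)^s\lesssim\rho^s$; thus $\nu$ inherits the $s$-dimensional growth of $\mu$ at coarse scales. For $\rho\le r$ I would instead use the density bound, $\nu(B(x,\rho))\le \|d\nu/d\mathcal{L}^n\|_\infty\,\mathcal{L}^n(B(x,\rho))\lesssim r^{s-n}\rho^n$, which is the $n$-dimensional behaviour of the mollified measure at fine scales. The key feature is that the two bounds agree (up to constants) at $\rho = r$, each giving $r^s$.

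The energy is then controlled through the Riesz potential $U(x) = \int|x-y|^{-s}\,d\nu(y)$, using the layer-cake identity $U(x) = s\int_0^\infty \nu(B(x,\rho))\,\rho^{-s-1}\,d\rho$ and splitting the $\rho$-integral at $r$ and at the (fixed) diameter $D$ of $\supp\nu$. On $(0,r)$ the fine-scale bound gives $s\int_0^r r^{s-n}\rho^{n-s-1}\,d\rho \lesssim r^{s-n}\cdot r^{n-s} = 1$, where convergence at the origin is precisely where the hypothesis $s\le m < n$ is used. On $(r,D)$ the coarse-scale bound gives $s\int_r^D\rho^{-1}\,d\rho\lesssim\log(1/r)$, and the tail $(D,\infty)$ contributes $O(1)$. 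Hence $U(x)\lesssim\log(1/r)$ uniformly in $x$, and since $\nu$ is a probability measure, $I_s(\nu) = \int U\,d\nu \le \sup_x U(x)\lesssim\log(1/r)$. This is exactly \eqref{energy_bound_2} in the borderline case $s=m$. For $s<m$ the same estimate applies verbatim, and I would conclude by observing that $\log(1/r)\lesssim r^{s-m}$ on the relevant range, since $r^{m-s}\log(1/r)$ is bounded for $r\in(0,1)$; this gives \eqref{energy_bound_1}.

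The construction itself is routine; the only delicate point is the bookkeeping at the transition scale $\rho = r$, ensuring that the fine-scale integral converges (which forces $s<n$, guaranteed by $s\le m<n$) while the coarse-scale integral produces only a logarithm. I note that the mollification in fact delivers the single estimate $I_s(\nu)\lesssim\log(1/r)$ throughout $s\le m$, so the dichotomy in the statement is mild and \eqref{energy_bound_1} follows from \eqref{energy_bound_2} \emph{a fortiori}.
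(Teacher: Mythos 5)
Your construction is essentially the paper's: the paper spreads $\mu$ over a disjoint family of $r$-balls, replacing $\mu$ on each ball by a multiple of Lebesgue measure, while you convolve with the normalized indicator of $B(0,r)$; the two constructions produce measures with the same two-regime ball growth ($\lesssim r^{s-n}\rho^{n}$ for $\rho\le r$, $\lesssim \rho^{s}$ for $\rho\ge r$), and both proofs then run the distribution-function/layer-cake computation. The one place you diverge is the endgame, and it matters. You bound only the $s$-energy, obtaining $I_s(\nu)\lesssim\log(1/r)$ in every case, recover \eqref{energy_bound_1} from the crude inequality $\log(1/r)\lesssim r^{s-m}$, and then remark that the dichotomy in the statement is ``mild.'' It is not: the quantity the paper actually estimates in its proof sketch, and the quantity needed when the lemma is invoked in the proof of Theorem \ref{main} (where the family is $m$-transversal and Lemma \ref{theorem:general_favard} is applied with exponent $m$), is the \emph{$m$-energy} $I_m(\nu)$. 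For $s<m$ the needed bound is $I_m(\nu)\lesssim r^{s-m}$, and since $I_m(\nu)$ is in general genuinely of order $r^{s-m}\gg\log(1/r)$, it does not follow from your $s$-energy estimate. The fix costs nothing: your two ball estimates, inserted into $m\int_0^\infty \nu(B(x,\rho))\,\rho^{-m-1}\,d\rho$, give $r^{s-n}\cdot r^{n-m}=r^{s-m}$ on $(0,r)$ (convergent since $m<n$) and $\int_r^D\rho^{s-m-1}\,d\rho\lesssim r^{s-m}$ (resp.\ $\log(1/r)$ when $s=m$) on $(r,D)$, which is exactly the stated dichotomy with $I_s$ read as $I_m$. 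In short: your argument is correct for the statement as literally written and follows the paper's method, but you should carry out the computation for the $m$-energy so that the lemma actually supports its application.
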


\begin{proof}[Summary of proof]
For $F$, $\mu$ and $r$ as in the statement of Lemma \ref{lem_energy_bound}, 
we can use a covering argument to find a disjoint collection of balls $\{B_i\}_{i = 1}^k$, each with radius $r$, so that 
$\tau := \mu \left(\bigsqcup_{i = 1}^k B_i\right) >0.$
The measure $\nu$ is then defined to be
  \begin{equation}\label{general_nu}  
  \nu(A) := \frac{1}{\tau} \sum_{i = 1}^k \mu(B_i) \frac{|A \cap B_i|}{ |B_i|},
    \end{equation}
where $|\cdot|$ denotes the $n$-dimensional Lebesgue measure.  
Note that $\nu$ is supported in $F(2r)$. The $\nu$ measure of a ball of radius $u$ can be bounded from above, considering the cases when $u\le r$, $r \le u \le 1$, and $1\le u$ separately, and a computation with the distribution function (analogous to the computations in the proof of Proposition \ref{thm:basic_marstrand}) shows that  $ I_m(\nu) \lesssim r^{s - m}$  when $s<m$.  Further, when $s=m$, a similar computation shows that 
$ I_m(\nu) \lesssim \log{\left(\frac{1}{r}\right)}.$
\end{proof}

With Lemmas \ref{theorem:general_favard} and \ref{lem_energy_bound} in tow, we now turn to the proof of Theorem \ref{main}. In this context, it will be important that the family is $m$-transversal, with $m$ matching the dimension of the target set. 

%PROOF OF THEOREM 
\begin{proof}[Proof of Theorem \ref{main}]
Assume that $\{\proj : \alpha \in A\}$ is $m$-transversal. 
Letting $F$ and $\mu$ be as in the hypotheses, we can use Lemma \ref{lem_energy_bound} to construct the auxiliary measure $\nu$ with computable energy. Applying the estimate \eqref{main_lower} of Lemma \ref{theorem:general_favard} to $\nu$ and $F(2r)$ yields the theorem.
\end{proof}

%%%%%%%%%%%%%%%%%%%% 
%%%% SECTION 
\section{Applications and examples}\label{section_apps}
\subsection{Proving Theorems \ref{theorem:smooth_surface_visibility} and \ref{FavTheorem}}\label{section:main_proofs}

We now turn to self-contained proofs of the applications to Favard curve length and visibility, respectively.

\begin{proof}[Proof of Theorem \ref{FavTheorem}]
In the case that $\Ga$ satisfies the simple curvature assumption of Definition \ref{defn:curvature_simple}, we can apply Lemma \ref{curve_trans} to conclude that the curve projections associated to $\FavG$ form a $1$-transversal family, and the theorem follows from
Theorem \ref{main}.
The reductions made at the beginning of Section \ref{Favard curve section} imply that establishing the theorem for this special class of $\Ga$ suffices. 
\end{proof}

On the other hand, the visibility result requires a little bit more analysis, since transversality depends on the relative geometry of the visibile set and the vantage set. 

\begin{proof}[Proof of Theorem \ref{theorem:smooth_surface_visibility}]
In Lemma \ref{lemma:tube_transversal}, we established that the family of radial projections $\{P_a : a \in A\}$ is $(n - 1)$-transversal provided that the underlying probability measure $\psi$ supported on $A$ satisfies the tube condition with respect to $E$:
$$\psi(T_{\delta}) \lesssim \delta^{n - 1}.$$

In our context, $\psi = \mathcal{H}^{n - 1}$ and it suffices to show that there exists a positive $\delta > 0$ such that for any tube $T_{\delta}$ passing through the visible set $E$, we have
$$\mathcal{H}^{n - 1}(T_{\delta} \cap A) \lesssim \delta^{n-1}.$$
However, this follows immediately from the tangent plane condition: the angle between the tube $T_{\delta}$ and any tangent plane to $A$ is uniformly bounded away from zero and the claim follows. Now that transversality has been established, we conclude the proof with an application of Theorem \ref{main} as in the previous argument. 
\end{proof}

A slightly more general version of Theorem \ref{theorem:smooth_surface_visibility} is available without separation between the vantage set $A$ and the visible set $E$.  
The tube condition is also guaranteed 
 upon replacing our tangent plane condition with the following slightly more technical statement:
there exists $\de_0>0$ and 
$\theta_0 \in (0,\frac{\pi}{2})$ so that,
 %upon replacing our tangent plane condition with the following slightly more technical statement:
%Recall, for $x\neq y$, $L_{x,y}$ denotes the line through $x$ and $y$ and 
%$L_{x,y}(\de)$ denotes its $\de$-thickening. $A_a$ denotes the tangent line to $A$ at $a$.  
%Suppose $A$ satisfies the following: 
if $a\in L_{x,y}(\de_0)\cap A$ for distinct $x,y\in E$, then $A_a$ meets $L_{x,y}$ at an angle of at least $\theta_0$. 

\subsection{Applications to dynamically generated sets}\label{section:application_dynamic}
A key tool in proving Theorem \ref{main} was to establish the existence of an auxiliary measure $\nu$ supported near $F$ whose $s$-energy is easily computable. Lemma  \ref{theorem:general_favard} then relates the average projection length to the energy. In the case of many fractal sets, we can construct the special measure $\nu$ in a geometrically motivated \textit{ad hoc} manner. We now turn to the proof of Corollary \ref{corollary:four_corner_favard}.

\begin{proof}[Proof of Corollary \ref{corollary:four_corner_favard}]
Set $s=m=1$ and $r= \left(\frac{1}{4}\right)^n$. 
Recall $\K_n$ denotes the $n$-th generation in the construction of the four-corner Cantor set $\K$. We can write $\K_n$ as the union of $4^n$ squares $Q_i$ of side length $4^{-n}$, and 
define a probability measure on $\nu$ supported on $\K_n$ by 
$$\nu(A) =  \sum_{i=1}^{4^n}  \frac{ |A\cap Q_i|}{ \left(\frac{1}{4}\right)^n}.$$
This is the equidistributed measure on $\K_n$ (and can be compared to the constructed measure of Lemma \ref{lem_energy_bound} when $\mu$ denotes the $1$-Hausdorff measure restricted to $\mathcal{K}$). 

Observe $\nu(\K_n) =1$ and 
\[  \nu(B(x, u)) \sim  \left\{  
\begin{array}{ll}
      u^2/r  & \text{   for }   u\le r \\
      u        & \text{   for }        u\geq r \\
     1         & \text{   for }    u\geq 1. \\
\end{array} 
\right. \]
A direct estimate of the energy integral leads to
\begin{equation}\label{butter}
I_1(\nu) \sim \log{\left( \frac 1 r \right)} \sim n.
\end{equation}

Next, as we  have already established in Lemma \ref{curve_trans} that the curve projections which lead to $\FavG$ are a $1$-transversal family under our simple curvature assumption, we can apply Lemma \ref{theorem:general_favard} to conclude that
\begin{equation}\label{sugar}
\frac{1}{ I_1(\nu)} \lesssim \int_\R |\Phi_\al(\K_n)| d\al.
\end{equation}

Combining \eqref{butter} and \eqref{sugar} completes the proof of Corollary \ref{corollary:four_corner_favard}.
\end{proof}
%%%%%%

%%%%%%%%%%%%%%%%%%%%%%%%%%%%%%%%%%%%%%%%%%%%
It is worth emphasizing that the main point here is the existence of the measure $\nu$ with easily bounded energy at the appropriate dimension. As such, these techniques apply to a much broader class of fractal sets at dimension $1$; whenever we can have a piece-counting argument that gives a sharp estimate for $I_1(\nu)$, we will get a similar bound. This is frequently the case for fractals that are generated by an iterated function system, including $\K_n$. 

Next, we give the corresponding applications for visibility:

\begin{proof}[Proof of Corollary \ref{corollary:four_corner_visibility}]
Since no tangent line to the curve $\Gamma$ passes through the compact set $[0, 1]^2$, there is a positive distance between any tangent line to $\Gamma$ and $\K_n$. This is the two-dimensional version of the non-tangency assumption of Theorem \ref{theorem:smooth_surface_visibility} and thus the family of radial projections $\{P_a : a \in \Gamma\}$ is $1$-transversal. Again taking $\nu$ to be the equidistributed measure on $\K_n$, the corollary now follows from Lemma \ref{theorem:general_favard} and the estimate \eqref{butter}. 
\end{proof}

\subsection{Projections without transversality}\label{section:nonexamples}
In each of the cases handled above, a notion of transversality 
is used 
to show that the set of parameters which cannot distinguish two nearby points on an appropriate scale is rather small. 
One may ask whether such a condition is necessary. 
In the following examples, we explore what can happen when transversality is absent.  

\begin{example}[Asymptotic $\Fav_\Ga$ that decays too fast]
Suppose that the curve $\Gamma$ is $x$-axis in $\mathbb{R}^2$, suppose $F$ is a horizontal line segment, and consider the curve projections $\Phi_\al$ of Section \ref{Favard curve section}.  
Then Theorem \ref{Favard curve section} fails.  
\end{example}
Recalling \eqref{eq_formulations}, we see that
$$\operatorname{Fav}_{\Gamma}(F(r)) \sim 2r.$$
This tends to zero much more rapidly than $(\log r^{-1})^{-1}$.

Our next example illustrates that Favard curve length does not necessarily detect rectifiability without a transversality assumption.  In particular, without a curvature assumption, it is possible to have a purely unrectifiable set with positive and finite Hausdorff $1$-measure, which has strictly positive Favard curve length. 

\begin{example}[A lower bound that does not decay]
Suppose that $\Gamma$ is a straight line in $\mathbb{R}^2$ passing through the origin with slope $\frac 1 2$ (or angle $\theta = \arctan \frac 1 2$) and that $F$ is the $4$-corner Cantor set. 
Consider the curve projections $\Phi_\al$ of section \ref{Favard curve section}. Then for all $\alpha$ so that $\Phi_\al$ is defined on $\K$, the projection $\Phi_\al \K$ is an interval with length comparable to $1$.
\end{example}

 To see this, consider the first generation of the four corner Cantor set $\mathcal{K}$ and its four constituent squares. Each square projects to an interval. Since the line has slope $1/2$, the points $(1/4, 0)$ and $(3/4, 1/4)$ project to the same position within $L_{\alpha}$. Similarly, $(0, 1/4)$ and $(1, 3/4)$ share a projection and so do $(1/4, 3/4)$ and $(3/4, 1)$.
Therefore, the projection of the bottom right square is a segment connecting $\proj (1, 0)$ and $\proj (3/4, 1/4)$; the projection of the lower left square is a segment connecting $\proj (1/4, 0)$ and $\proj (0, 1/4)$, and so on. The four intervals found in this manner only meet at their endpoints, and their union is an interval with length greater than $1$.  
Finally, an application of self-similarity shows that this argument works for the second generation of the Cantor set as well; this extends to all subsequent generations and $\mathcal{K}$ itself. 

As a final example, we see
what happens for visibility when we do not assume the tube condition. 

\begin{example}[co-planar sets lack the tube condition]\label{notube}
Suppose $A$ and $E$ are as in Theorem \ref{theorem:smooth_surface_visibility} so that 
 $A$ is a smooth $(n-1)$-dimensional surface, $E$ has positive $s$-dimensional Hausdorff measure, and 
 $|a-e|\lesssim 1$ for each $a\in A$ and $e\in E$.
Moreover, assume that $A$ and $E$ are both subsets of the same hyperplane in $\R^n$. 
Consider the radial projections $P_a$ of Section \ref{radial}.  
Then the lower bounds of Theorem \ref{theorem:smooth_surface_visibility} fail when $s>n-2$.  
\end{example}

For $A$ and $E$ in $\R^n$ and $a\in A$, the radial projection $P_a(E)$ is a set of Hausdorff dimension at most $n-1$.  Embedding $A$ and $E$ in the same hyperplane guarantees that $P_a(E)$ is a set of Hausdorff dimension at most $n-2$. As such, it can be covered by $C \left(\frac{1}{r}\right)^{n-2}$ balls of radius $r$, for some $C$.  Since the $(n-1)$-dimensional measure of a ball is of order $r^{n-1}$, we conclude that the $(n-1)$-dimensional Hausdorff measure restricted to $S^{n-1}$ is bounded by $|P_a(E(r))| \lesssim r$. Since $r \ll log(\frac 1 r)^{-1} $ and $r \ll r^{n-1-s}$ whenever $n-2<s$ and $r$ is sufficiently small, both the first and second estimates of Theorem \ref{theorem:smooth_surface_visibility} fail in this regime.  

To see what goes awry in Example \ref{notube}, note that the tube $L_{x,y}(\delta)$ for distinct $x,y\in E(r)$ intersects $A$ in a set of measure $\de^{n-2} \gg \de^{n-1}$ and the upper bound required by the tube condition in \eqref{equation:tube} fails. In this case, $P_a(E)$ for $a\in A$ fails to differentiate the points of $E$.

As an explicit example of what fails, consider the case that $n = 2$. When $A$ and $E$ are contained in the same line, then $P_a(E)$ consists of at most two points for any $a \in A$. This means that the projections $P_a$ cannot differentiate points in $E$.

\bibliography{refs}
\bibliographystyle{abbrv}

\end{document}